\newtheorem{algorithm}{Algorithm}
\begin{document}
\title{An iterative two-grid method for strongly nonlinear elliptic boundary value problems}


 \author[Zhan J J et.~al.]{Jiajun Zhan\affil{1},
       Lei Yang\affil{1}, Xiaoqing Xing\affil{2}, and Liuqiang Zhong\affil{2}\comma\corrauth}
 \address{\affilnum{1}\ School of Computer Science and Engineering, Faculty of Innovation Engineering,
          Macau University of Science and Technology,
          Macao SAR 999078, China. \\
           \affilnum{2}\ School of Mathematical Sciences,
           South China Normal University, Guangzhou 510631, China. }
 \emails{{\tt 2109853gii30011@student.must.edu.mo} (J.~Zhan), {\tt leiyang@must.edu.mo} (L.~Yang),
          {\tt xingxq@scnu.edu.cn} (X.~Xing), {\tt zhong@scnu.edu.cn} (L.~Zhong)}

\begin{abstract}
We design and analyze an iterative two-grid algorithm for the finite element discretizations of strongly nonlinear elliptic boundary value problems in this paper. 
We propose an iterative two-grid algorithm, in which a nonlinear problem is first solved on the coarse space, and then a symmetric positive definite problem is solved on the fine space.
The main contribution in this paper is to  establish  a first convergence analysis, which requires dealing with four coupled error estimates,  for the iterative two-grid methods.
We also present some numerical experiments to confirm the efficiency of the proposed algorithm.
\end{abstract}

\ams{65N30, 65M12, 35J60}
\keywords{iterative two-grid method, convergence, strongly nonlinear elliptic problems.}

\maketitle

\section{Introduction}

The two-grid methods are first proposed for nonselfadjoint problems and indefinite elliptic problems \cite{XuJC92Meeting, XuJCCaiXC92:311}. 
Then, the two-grid methods  are extended to solve  semilinear elliptic problems \cite{XuJC94:231}, quasi-linear and  nonlinear elliptic problems \cite{XuJC94:79, XuJC96:1759}, respectively.
Especially, for nonlinear elliptic problems, the basic idea of two-grid methods is to first obtain a rough solution by solving the original problem in a ``coarse mesh'' with mesh size $H$, and then correct the rough solution by solving a symmetric positive definite (SPD) system in a ``fine mesh'' with mesh size $h$. 
Noticing the ``coarse mesh'' could be extremely coarse in contrast to the ``fine mesh'', it is not difficult to solve an original problem in ``coarse mesh''.
Therefore, two-grid methods reduce the computational complexity of solving the original problem to solving a SPD problem and dramatically improve the computational speed.
Recently, Bi, Wang and Lin \cite{BiCJWangC18:23} presented a two-grid algorithm to solve the strongly nonlinear elliptic problems and provided a posteriori error estimator for the two-grid methods. 
It's noted that the literature mentioned above is all about non-iterative two-grid methods.

As is well-known, the mesh size $H$ of ``coarse mesh'' and $h$ of ``fine mesh'' should satisfy a certain relationship for the optimal convergence order in non-iterative two-grid methods.
The iterative two-grid methods have the advantage over the non-iterative two-grid methods in that, the distance between the mesh sizes $H$ and $h$ can be enlarged by increasing the iteration counts with the same accuracy.
However, there is only a small amount of literature on iterative two-grid methods of conforming finite element discretization for elliptic problems. 
Xu \cite{XuJC96:1759} first proposed and analyzed an iterative two-grid method for non-symmetric positive definite elliptic problems. 
Zhang, Fan and Zhong \cite{ZhangWFFanRH20:522} designed some iterative two-grid algorithms for semilinear elliptic problems and provided the corresponding convergence analysis. 
To our knowledge, there is not any published literature on the iterative two-grid algorithm of conforming finite element discretization for strongly nonlinear elliptic boundary value problems.

In this paper, an iterative two-grid algorithm for solving strongly nonlinear elliptic problems is studied.
The discrete system of strongly nonlinear elliptic problems is presented at first. 
And then, an iterative two-grid algorithm is proposed for the discrete system, which is obtained by applying a non-iterative two-grid algorithm of \cite{XuJC94:79} in a successive fashion.
Finally, a challenging convergence analysis of the proposed algorithm is provided.
Despite the fact that our algorithm is simply obtained by \cite{XuJC94:79}, the convergence analysis of the non-iterative two-grid algorithm could not be directly applied to the iterative two-grid algorithm. 
Here we complete this challenging convergence analysis by mathematical induction which can also be used in solving semilinear elliptic problems by iterative two-grid algorithms in \cite{ZhangWFFanRH20:522}. 
However, we must emphasize that the convergence analysis of our algorithm is significantly different from the one of \cite{ZhangWFFanRH20:522}. 
Compared with the current work \cite{ZhangWFFanRH20:522}, our convergence analysis is far more difficult and complex, and specific challenges could be reflected in:
(1) the higher order derivative component of our model problem is still nonlinear;
(2) the coupled error estimates cause formidable obstacles for the convergence analysis (See the proof of Lemma 4.7).

To avoid the repeated use of generic but unspecified constants, $x \lesssim y $ is used to denote $x \leq C y$, where $C$ are some positive constants which do not depend on the mesh size. 
Furthermore the constants $C$ may denote different values under different circumstances. 
For some specific constants, we use the constant $C$ with some subscript to denote.

The rest of the paper is organized as follows.
In Section \ref{Sec:model}, the discrete scheme of strongly nonlinear elliptic problems, as well as the corresponding well-posedness and priori error estimates,  are introduced.
In Section \ref{Sec:algorithm}, an  iterative two-grid algorithms is proposed. 
And then some preliminaries and the convergence analysis of the proposed algorithms are provided in Section \ref{Sec:convergence}. 
Finally, some numerical experiments are presented to verify the efficiency of the proposed algorithm in Section \ref{Sec:numerical}.

\section{Model problems and discrete systems}\label{Sec:model}

In this section, we present the continuous and discrete variational problems of strongly nonlinear elliptic problems, and provide the corresponding well-posedness and priori error estimates.

Given a bounded convex polygonal domain $\Omega \subset \mathbb{R}^2$ with the boundary $\partial \Omega$. 
We denote $W^{m, p}(\Omega)$ as the standard Sobolev space with norm $\| \cdot \|_{m, p, \Omega}$ and seminorm $\mid \cdot \mid_{m, p, \Omega}$, where the integers $m \geq 0$ and $p \geq 1$.
For convenience, we also denote $H^m(\Omega) = W^{m, 2}(\Omega)$, $\| \cdot \|_{m} = \| \cdot \|_{m, 2, \Omega}$ and $H^1_0(\Omega) := \{u\in H^1(\Omega) : u\mid_{\partial \Omega} =0 \}$.

We consider the following strongly nonlinear elliptic problems:
\begin{equation}\label{Eqn:u}
\left\{\begin{aligned}
- \nabla \cdot \boldsymbol{a}(\boldsymbol{x}, u, \nabla u)+f(\boldsymbol{x}, u, \nabla u) &=0, & & \text { in } \Omega, \\
u &=0, & & \text { on } \partial \Omega, 
\end{aligned}\right.
\end{equation}
where $\boldsymbol{a}(\boldsymbol{x}, y, \boldsymbol{z}) : \bar{\Omega} \times \mathbb{R} \times \mathbb{R}^2 \rightarrow \mathbb{R}^2$ and $f(\boldsymbol{x}, y, \boldsymbol{z}): \bar{\Omega} \times \mathbb{R} \times \mathbb{R}^2 \rightarrow \mathbb{R}$. 
When $\boldsymbol{a}(\boldsymbol{x}, u, \nabla u)$ and $f(\boldsymbol{x}, u, \nabla u)$ take different functions, different problems are available, such as mean curvature flow, Bratu's problem and so on(See \cite{GudiNataraj08:233}). 


We assume that $\boldsymbol{a}(\boldsymbol{x}, y, \boldsymbol{z})$ and $f(\boldsymbol{x}, y, \boldsymbol{z})$ are second order continuous differentiable functions. 
For simplicity, we denote that 
$\boldsymbol{a}_y(w) = D_y\boldsymbol{a}(\boldsymbol{x}, w, \nabla w),  \boldsymbol{a}_{\boldsymbol{z}}(w) = D_{\boldsymbol{z}}\boldsymbol{a}(\boldsymbol{x}, w, \nabla w) $, 
$f_y(w) = D_yf(\boldsymbol{x}, w, \nabla w)$ and $f_{\boldsymbol{z}}(w) = D_{\boldsymbol{z}}f(\boldsymbol{x}, w, \nabla w) $,
and similar notations are applied to the second order derivatives of $\boldsymbol{a}(\boldsymbol{x}, y, \boldsymbol{z})$ and $f(\boldsymbol{x}, y, \boldsymbol{z})$. 

\begin{remark}\label{Rem:Assaf}
Since $\boldsymbol{a}(\boldsymbol{x}, y, \boldsymbol{z})$ and $f(\boldsymbol{x}, y, \boldsymbol{z})$ are second order continuous differentiable functions, 
there exists a positive constant $\tilde{C}$ as upper bound with respect to all the first and second order derivatives of $\boldsymbol{a}(\cdot, \cdot, \cdot)$ and $f(\cdot, \cdot, \cdot)$. 
\end{remark}

We denote 
\begin{equation}\label{Eqn:A}
A(v, \varphi)=(\boldsymbol{a}(\boldsymbol{x}, v, \nabla v), \nabla \varphi)+(f(\boldsymbol{x}, v, \nabla v), \varphi), \quad \forall v, \varphi \in H_0^1(\Omega). 
\end{equation}
By Green formula, it's easy to see that the solution $u \in H_0^1(\Omega)$ of \eqref{Eqn:u} satisfies
\begin{equation}\label{Eqn:Varu}
A(u, v)=0, \quad \forall v \in H_0^1(\Omega). 
\end{equation}


The Fr\'{e}chet derivative $\mathcal{L}^{\prime}$ of \eqref{Eqn:u} at $w $ is given by
$$
\mathcal{L}^{\prime}(w) v=- \nabla \cdot (\boldsymbol{a}_y(w)v + \boldsymbol{a}_{\boldsymbol{z}}(w)\nabla v)+f_y(w)v + f_{\boldsymbol{z}}(w)\nabla v .
$$

In the following, we give some of our basic assumptions (Similar assumptions also could be found in \cite{XuJC96:1759} or \cite{GudiNataraj08:233}).
Firstly, the problem \eqref{Eqn:Varu} has a solution $u \in H_0^1(\Omega) \cap H^{r+1}(\Omega) \cap W^{2, 2+\varepsilon}(\Omega) $ ($\varepsilon > 0$ and integer $r\geq 1$).
Secondly, for the solution $u$ of \eqref{Eqn:Varu}, there exists a positive constant $\alpha_0$ such that
\begin{equation}\label{Eqn:Coraz}
\boldsymbol{\xi}^{T} \boldsymbol{a}_{\boldsymbol{z}}(u) \boldsymbol{\xi} \geq \alpha_{0}\mid\boldsymbol{\xi}\mid^{2}, \quad \forall \boldsymbol{\xi} \in \mathbb{R}^{2}, \ \boldsymbol{x} \in \bar{\Omega}.
\end{equation}
Finally, $\mathcal{L}^{\prime}(u): {H}_{0}^{1}(\Omega) \rightarrow {H}^{-1}(\Omega)$ is an isomorphism. 
These assumptions guarantee that $u$ is an isolated solution of \eqref{Eqn:Varu}.


Let $\mathcal{T}_h$ be a conforming quasi-uniform triangulation on $\Omega$, where the mesh size $h$ denotes the maximum of the circumscribed circle diameters of element $K \in \mathcal{T}_h$.
By this, any element $K\in \mathcal{T}_h$ is contained in (contains) a circle of radius $\hat{C}_1 h$ (respectively, $\hat{C}_2 h$), where the constant $\hat{C}_1$ and $\hat{C}_2$ do not depend on mesh size $h$, and there is no hanging node on $\mathcal{T}_h$.

The finite element space $V_h$ on $\mathcal{T}_h$ is defined as
\begin{equation*}
V_h = \{ v_h \in H_0^1(\Omega) : v_h\mid_K \in \mathcal{P}_r(K),\ \forall\  K \in \mathcal{T}_h \},
\end{equation*}
where $\mathcal{P}_r(K)$ is the set of polynomials of degree at most integer $r$ on $K$.

Here is the discrete system of \eqref{Eqn:Varu}: Find $u_{h} \in {V}_{h}$ such that
\begin{equation}\label{Eqn:uh}
 A\left(u_{h}, v_h\right)=0, \quad \forall v_h \in {V}_{h}.
\end{equation}

The following lemma presents the well-posedness of the variational problem \eqref{Eqn:uh} and its priori error estimates, which can be found in Lemma 3.2 and Theorem 3.4 of \cite{XuJC96:1759}, respectively.

\begin{lemma}\label{Lem:uh}
Assume $u $ is the solution of problem \eqref{Eqn:Varu}, then when $h$ is small enough, the discrete variational problem \eqref{Eqn:uh} exists a unique solution $u_h \in V_h$, and the following priori error estimate holds 
\begin{eqnarray}\label{Eqn:u-uh1p} 
\left\|u-u_{h}\right\|_{1, p} \lesssim h^{r}, \quad \text { if }\ u \in {W}^{r+1, p}(\Omega),  2 \leq p \leq \infty. 
\end{eqnarray}
\end{lemma}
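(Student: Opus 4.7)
The plan is the classical Xu-style linearization plus fixed-point argument around the exact solution $u$. Introduce the linear bilinear form $L(v,\varphi) := (\mathcal{L}'(u) v, \varphi)$ determined by the Fréchet derivative at $u$. By the assumed isomorphism property of $\mathcal{L}'(u)$ together with a Schatz-type duality argument on the quasi-uniform mesh $\mathcal{T}_h$, one obtains an $h$-uniform discrete inf-sup condition for $L$ on $V_h \times V_h$ once $h$ is small enough. Using this inf-sup, standard interpolation bounds, and the Schatz--Wahlbin $W^{1,p}$/$L^\infty$ estimates for linear quasi-uniform finite element problems, the $L$-Galerkin projection $\tilde u_h \in V_h$ defined by $L(u - \tilde u_h, v_h) = 0$ for all $v_h \in V_h$ satisfies $\|u - \tilde u_h\|_{1,p} \lesssim h^r$ for every $2 \leq p \leq \infty$.

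Next, existence and uniqueness of $u_h$ are established by a fixed-point argument. Define $T : V_h \to V_h$ by requiring $L(T(w_h), \varphi_h) = L(w_h, \varphi_h) - A(w_h, \varphi_h)$ for all $\varphi_h \in V_h$, so that fixed points of $T$ are precisely the solutions of the discrete problem \eqref{Eqn:uh}. Working on a small closed ball $B_\delta = \{w_h \in V_h : \|w_h - \tilde u_h\|_{1,\infty} \leq \delta\}$ and using Taylor's formula for $\boldsymbol{a}$ and $f$ around $u$, whose remainders are controlled by the second-derivative constant $\tilde C$ of Remark \ref{Rem:Assaf}, one shows that $T$ maps $B_\delta$ into itself and is a strict contraction as long as $\delta$ and $h$ are small. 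The Banach contraction theorem then produces a unique $u_h \in B_\delta$ solving \eqref{Eqn:uh}; the isolation of $u$ excludes spurious discrete solutions outside $B_\delta$ for $h$ small.

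For the a priori estimate, split $u - u_h = (u - \tilde u_h) + (\tilde u_h - u_h)$; the first piece is already $O(h^r)$ in $W^{1,p}$ by the first step. For the second, subtracting $A(u, \varphi_h) = 0$ from $A(u_h, \varphi_h) = 0$ and Taylor-expanding the nonlinear fluxes gives $L(u - u_h, \varphi_h) = R(u - u_h; \varphi_h)$ with remainder $R$ quadratic in $u - u_h$. Since $L(u - \tilde u_h, \varphi_h) = 0$, this reduces to $L(\tilde u_h - u_h, \varphi_h) = R(u - u_h; \varphi_h)$, and the discrete inf-sup of $L$ in the $W^{1,p}/W^{1,p'}$ setting, together with an elementary bootstrap on $\|u - u_h\|_{1,\infty} \to 0$, completes the proof for every $2 \leq p \leq \infty$.

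The principal obstacle is establishing the $h$-uniform discrete inf-sup stability of $L$ in the non-Hilbert $W^{1,p}$ norm for $p > 2$, and in particular for the limit $p = \infty$: the $L^2$-based Aubin--Nitsche duality that suffices when $p = 2$ has to be replaced by the considerably more delicate Schatz--Wahlbin weighted-norm and discrete Green's function machinery, which is exactly what the quasi-uniformity of $\mathcal{T}_h$ is used for.
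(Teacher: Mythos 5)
The paper does not prove this lemma at all: it is imported verbatim, with the well-posedness taken from Lemma 3.2 and the $W^{1,p}$ error estimate from Theorem 3.4 of \cite{XuJC96:1759}, so there is no in-paper argument to compare against. Your sketch is essentially a reconstruction of that cited proof --- linearize at $u$ via $\mathcal{L}'(u)$, get an $h$-uniform discrete inf-sup for the linearized form (this is exactly what Lemma \ref{Lem:infsupBu} in the present paper does, by the same $P_h$-projection/duality trick), control the linearized Galerkin projection in $W^{1,p}$ up to $p=\infty$ by Schatz--Wahlbin/discrete Green's function technology, and then run a contraction (or Brouwer-type) fixed-point argument in a small $W^{1,\infty}$ ball around the projection, with the quadratic Taylor remainder giving both the contraction and the final error bound. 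At outline level this is sound and is the standard route. Two caveats if you were to write it out: (i) your map $T$ contracts only after you have $W^{1,\infty}$ stability of the discrete linearized solution operator and smallness of $\|u-\tilde u_h\|_{1,\infty}$, which you correctly flag as the hard part but leave entirely to citation --- that is where nearly all the work of \cite{XuJC96:1759} and the Schatz--Wahlbin literature sits, so your proposal is a proof skeleton rather than a proof; (ii) the uniqueness you obtain is uniqueness in the ball $B_\delta$ (local uniqueness near the isolated solution $u$); isolation of $u$ does not by itself exclude discrete solutions far from $B_\delta$, and the lemma should be read in that local sense, as in the original reference.
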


\section{Iterative two-grid algorithms}\label{Sec:algorithm}

In this section, we present an iterative two-grid algorithm for the variational problems \eqref{Eqn:Varu}. 

Let $\mathcal {T}_h$ and $\mathcal {T}_H$ be two quasi-uniform, conforming and nested mesh in $\Omega$.
Furthermore the mesh size $h$ of $\mathcal{T}_h$ and $H$ of $\mathcal{T}_H$ satisfy, for some $0 < \lambda <1$,
$$
H = O(h^\lambda) \quad \text{and} \quad h < H < 1.
$$

For present the iterative two-grid algorithm, 
 we introduce the form $B(w; v, \chi)$ (induced by $\mathcal{L}^\prime$) by ,  for a fixed $w$ and any $v, \chi \in H_0^1(\Omega)$, 
\begin{equation}\label{Eqn:B}
B(w; v, \chi) = (\boldsymbol{a}_y(w)v, \nabla \chi) + (\boldsymbol{a}_{\boldsymbol{z}}(w)\nabla v, \nabla \chi) + (f_y(w)v,  \chi) + (f_{\boldsymbol{z}}(w)\nabla v,  \chi).
\end{equation}

\begin{remark}\label{Rem:Bbilinear}
The form $B(w; \cdot, \cdot)$ is a bilinear form with fixed $w$. 
\end{remark}

To our knowledge, the two-grid algorithms of strongly nonlinear problems are firstly proposed in \cite{XuJC94:79}.
Here one of two-grid algorithms from Algorithm 3.3 of \cite{XuJC94:79} is given. 
\begin{algorithm}
\label{Alg}

\begin{description}
\item[1.] Find $u_H \in {V}_{H}$, such that
$$
A(u_H, v_H) = 0, \qquad \forall v_H \in V_H.
$$
\item[2.] Find $u^h \in {V}_{h}$, such that
$$
B(u_H; u^h, v_h) =  B(u_H; u_H, v_h) - A(u_H, v_h), \qquad \forall v_h \in V_h.  
$$
\end{description}
\end{algorithm}

\begin{remark}
In the Algorithm \ref{Alg}, we first solve a nonlinear problem in a coarse space $V_H$.
However, because $\mathrm{dim}(V_H)$ is relatively small, the calculated amount of solving a nonlinear problem in $V_H$ is not excessive.
As for the second step of Algorithm \ref{Alg}, noticing that $B(u_H; \cdot, \cdot)$ is a bilinear form with given $u_H$, we simply need to solve a linear problem in $V_h$, for which there are numerous concerning fast algorithms.
\end{remark}

In \cite{XuJC94:79},  Xu had showed that the solution $u^h$ of Algorithm \ref{Alg} could be a good approximation with respect to finite element solution $u_h$ at a low cost, namely,
\begin{equation}\label{Eqn:Xu}
\|u_h -u^h\|_1 \lesssim H^2. 
\end{equation}
Using triangle inequality, \eqref{Eqn:u-uh1p} with $r=1$ and \eqref{Eqn:Xu}, we obtain the error estimate of Algorithm \ref{Alg}, 
\begin{equation}\label{Eqn:XuError}
\| u - u^h\|_1 \leq \| u - u_h \|_1 + \| u_h - u^h\|_1 \lesssim h + H^2. 
\end{equation}

Next, putting the Algorithm \ref{Alg} into a successive fashion, we obtain our iterative two-grid algorithm. 
\begin{algorithm}
\label{algorithm}
Let $u_h^0 = u_H$ be the solution of \eqref{Eqn:uh} in $V_H$.
Assume that $u_h^k\in V_h$ has been obtained, then $u_h^{k+1}\in V_h$ can be obtained by the following two steps.

\textbf{ Step 1.} Find $e_H^k\in V_H$ such that, for any $v_H\in V_H$, 
\begin{eqnarray} \label{Eqn:eHk}
A(u_h^k + e_H^k, v_H)=0.
\end{eqnarray}

\textbf{ Step 2.} Find $u^{k+1}_h\in V_h$ such that, for any $v_h \in V_h$, 
\begin{eqnarray}\label{Eqn:uhk+1}
B(u_h^k + e_H^k; u_h^{k+1}, v_h) = B(u_h^k + e_H^k; u_h^k + e_H^k, v_h) - A(u_h^k + e_H^k, v_h).
\end{eqnarray}
\end{algorithm}

\begin{remark}\label{Rem:VS}
Noticing the uniqueness of finite element solution (See Lemma \ref{Lem:uh}), \eqref{Eqn:uh}, $u_h^0 = u_H$ and \eqref{Eqn:eHk} with $k = 0$, we can see that $e_H^0 = 0$, which means $u_h^0 + e_H^0 = u_H$. 
By observing the the Step 2 of Algorithm \ref{algorithm} and the second step of Algorithm \ref{Alg}, the conclusion is that Algorithm \ref{algorithm} is same with Algorithm \ref{Alg} when $k = 0$. 
\end{remark}

In comparison to \cite{XuJC94:79}, our method is still valid for high order conforming finite elements, whereas \cite{XuJC94:79} only considered piecewise linear finite element space.
Here gives the error estimate of our algorithm (See Theorem \ref{Thm:last}), 
\begin{equation}\label{Eqn:Lin}
\| u- u_h^{k}\|_1 \lesssim h^r + H^{r+k}. 
\end{equation}
Specially, if we choose finite element space $V_h$ as piecewise linear finite element space, i.e. $r = 1$, the error estimate \eqref{Eqn:Lin} of Algorithm \ref{algorithm} could be written as
\begin{equation*}\label{Eqn:ZhanError}
\| u- u_h^{k}\|_1 \lesssim h + H^{1+k}. 
\end{equation*}
To achieve the optimal convergence order, the relationship $h = H^2$ should be satisfied in Algorithm \ref{Alg} (See \eqref{Eqn:XuError}).
But in Algorithm \ref{algorithm}, we could expand the distance between the mesh size $H$ and $h$ by increasing the iteration counts $k$.

\section{Convergence analysis}\label{Sec:convergence}

In this section, we provide the corresponding convergence analysis of Algorithm \ref{algorithm}. 
To this end, we need to introduce some preliminaries based on form $B(w;v, \chi)$ at first. 

\subsection{Some preliminaries based on form $B(w; v, \chi)$}

In this subsection, we present some properties of form $B(w; v, \chi)$ and introduce two discrete Green function.

Firstly, with fixed $w $, by Remark \ref{Rem:Assaf} and Cauchy-Schwarz inequality, it's easy to obtain that the form $B(w; \cdot, \cdot)$ is continuous, i.e., 
\begin{equation}\label{Eqn:Bcontinuous}
B(w; v, \chi)  \lesssim \|v\|_1 \|\chi \|_1, \quad  \forall \  v, \chi \in H_0^1(\Omega). 
\end{equation}

Secondly, we present a lemma which provides the Babu\v{s}ka-Brezzi(BB) conditions of  form $B(\cdot; \cdot, \cdot)$ in $V_h$.
And this lemma can be proved using similar arguments in Lemma 2.2 of \cite{XuJC96:1759}. 
\begin{lemma}\label{Lem:infsupBu}
Assume $u $ is the solution of problem \eqref{Eqn:Varu}, then when $h$ is small enough, we have, for any $w_h \in V_h$, 
\begin{equation}\label{Eqn:infsupBu}
\left\|w_{h}\right\|_{1} \lesssim \sup_{v_h \in V_h} \dfrac{B\left(u;w_h, v_h\right)}{\|v_{h}\|_1}
\quad \text{and} \quad
\left\|w_{h}\right\|_{1} \lesssim \sup_{v_h \in V_h} \dfrac{B\left(u;v_h, w_h\right)}{\|v_{h}\|_1}. 
\end{equation}
\end{lemma}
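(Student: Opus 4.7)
The plan is as follows. First I would observe that the bilinear form $B(u; \cdot, \cdot)$ decomposes into a coercive principal part $(\boldsymbol{a}_{\boldsymbol{z}}(u)\nabla v, \nabla \chi)$, which by the ellipticity condition \eqref{Eqn:Coraz} satisfies $(\boldsymbol{a}_{\boldsymbol{z}}(u)\nabla v, \nabla v) \geq \alpha_0 \|\nabla v\|_0^2$ on $H_0^1(\Omega)$, plus three lower-order terms whose coefficients are uniformly bounded by Remark \ref{Rem:Assaf}. A routine application of Young's inequality then produces a G{\aa}rding-type estimate
\[
B(u; w, w) + K\|w\|_0^2 \;\geq\; c\,\|w\|_1^2, \qquad \forall\, w \in H_0^1(\Omega),
\]
for positive constants $c, K$ independent of $h$. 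Combined with the standing assumption that $\mathcal{L}^{\prime}(u)\colon H_0^1(\Omega)\to H^{-1}(\Omega)$ is an isomorphism, this already yields the continuous inf-sup bound $\|w\|_1 \lesssim \sup_{v\in H_0^1(\Omega)} B(u;w,v)/\|v\|_1$.

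To transfer this continuous inf-sup to the discrete space $V_h$, I would employ a Schatz-type duality argument. For a given $w_h \in V_h$, consider the dual problem of finding $z \in H_0^1(\Omega)$ such that $B(u; v, z) = (w_h, v)$ for every $v \in H_0^1(\Omega)$. Since $\Omega$ is a convex polygon and the coefficients of $B(u; \cdot, \cdot)$ are smooth along the regular solution $u$, elliptic regularity gives $z \in H^2(\Omega)$ with $\|z\|_2 \lesssim \|w_h\|_0$. Introducing the standard finite element interpolant $\Pi_h z \in V_h$ (so that $\|z - \Pi_h z\|_1 \lesssim h\|z\|_2$) and splitting
\[
\|w_h\|_0^2 \;=\; B(u; w_h, z) \;=\; B(u; w_h, \Pi_h z) + B(u; w_h, z - \Pi_h z),
\]
I would bound the first term via the discrete supremum and $\|\Pi_h z\|_1 \lesssim \|w_h\|_0$, and the second by $\|w_h\|_1 \cdot h \|w_h\|_0$ using the continuity \eqref{Eqn:Bcontinuous}. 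This yields the intermediate estimate
\[
\|w_h\|_0 \;\lesssim\; \sup_{v_h \in V_h} \frac{B(u; w_h, v_h)}{\|v_h\|_1} \;+\; h\,\|w_h\|_1.
\]

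Finally, inserting this $L^2$-estimate into the G{\aa}rding inequality evaluated at $w = w_h$ and absorbing the resulting $h^2\|w_h\|_1^2$ contribution into the left-hand side for $h$ sufficiently small delivers the first inequality of \eqref{Eqn:infsupBu}. The second inequality follows by applying the same argument to the transposed bilinear form $(v,\chi)\mapsto B(u; \chi, v)$, whose associated operator is the $L^2$-adjoint of $\mathcal{L}^{\prime}(u)$ and inherits the isomorphism and $H^2$-regularity properties from $\mathcal{L}^{\prime}(u)$. The main obstacle I anticipate is the careful bookkeeping in the Schatz duality step: one must ensure that the required $H^2$-regularity of the dual problem and the constants controlling the absorption are uniform in $w_h$, which is exactly where convexity of $\Omega$ and the smoothness of the coefficients along $u$ enter, mirroring the proof of Lemma 2.2 of \cite{XuJC96:1759}.
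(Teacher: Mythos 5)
Your argument is correct, but it follows a genuinely different route from the paper. The paper starts from the continuous inf-sup condition (which follows directly from the assumption that $\mathcal{L}^{\prime}(u)\colon H_0^1(\Omega)\to H^{-1}(\Omega)$ is an isomorphism; the G{\aa}rding inequality is not needed for that step), then introduces the elliptic projection $P_h$ defined in \eqref{Eqn:Ph} through the SPD principal part $\boldsymbol{a}_{\boldsymbol{z}}(u)$ alone, integrates the lower-order terms by parts so that $B(u;w_h,v-P_hv)\lesssim h\|w_h\|_1\|v\|_1$ via the $L^2$-projection error \eqref{Eqn:I-Ph}, and finally absorbs the $O(h)\|w_h\|_1$ term, so the only duality argument required concerns the symmetric positive definite form $(\boldsymbol{a}_{\boldsymbol{z}}(u)\nabla\cdot,\nabla\cdot)$. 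You instead run the classical Schatz machinery: a G{\aa}rding inequality from \eqref{Eqn:Coraz} and the boundedness of the lower-order coefficients, an $L^2$-duality estimate $\|w_h\|_0\lesssim\sup_{v_h}B(u;w_h,v_h)/\|v_h\|_1+h\|w_h\|_1$, and absorption for small $h$; the transposed estimate in \eqref{Eqn:infsupBu} then comes from the adjoint operator. The trade-off is that your duality step requires existence and $H^2$-regularity (with $\|z\|_2\lesssim\|w_h\|_0$) for the adjoint of the \emph{full} nonsymmetric linearized operator, whose coefficients are only as smooth as $\boldsymbol{a}_y(u),\boldsymbol{a}_{\boldsymbol{z}}(u),f_y(u),f_{\boldsymbol{z}}(u)$ with $u\in W^{2,2+\varepsilon}(\Omega)$, i.e.\ $W^{1,2+\varepsilon}$ leading coefficients on a convex polygon; this is standard but should be stated explicitly, whereas the paper (following Lemma 2.2 of \cite{XuJC96:1759}) needs the analogous regularity only for the SPD principal part when invoking \eqref{Eqn:I-Ph}. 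Conversely, your approach avoids the integration by parts of the term $(\boldsymbol{a}_y(u)w_h,\nabla(v-P_hv))$ and treats both inequalities in \eqref{Eqn:infsupBu} symmetrically through the adjoint problem, which is arguably cleaner; the estimate \eqref{Eqn:Bcontinuous} is used in both proofs in the same way.
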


\begin{proof}
For the solution $u$ of \eqref{Eqn:Varu}, a projection operator $P_h: H_0^1(\Omega) \rightarrow V_h $  is defined by
\begin{equation}\label{Eqn:Ph}
(\boldsymbol{a}_{\boldsymbol{z}}(u) \nabla P_h v, \nabla \chi_h) = (\boldsymbol{a}_{\boldsymbol{z}}(u)  \nabla v, \nabla \chi_h ), \quad \forall v\in H_0^1(\Omega), \chi_h \in V_h.  
\end{equation}
By \eqref{Eqn:Coraz}, we can know that the projection operator $P_h$ is well-defined. 
Taking $ v = v_h \in V_h \subset H_0^1(\Omega)$ and $\chi_h = P_h v_h - v_h$, and using \eqref{Eqn:Coraz}, we could prove that the projection operator $P_h$ is identity operator for space $V_h$. 
Substituting $\chi_h = P_h v$ into \eqref{Eqn:Ph}, and using \eqref{Eqn:Coraz}, Poincar\'{e} inequality, Remark \ref{Rem:Assaf} and Cauchy--Schwarz inequality, it holds that
\begin{eqnarray}\label{Eqn:Phlesssim}
& \| P_hv\|_1 \lesssim \|v\|_1, \quad \forall v \in H_0^1(\Omega) . 
\end{eqnarray}
By using \eqref{Eqn:Coraz}, duality argument and \eqref{Eqn:Phlesssim}, we can obtain  
(See Theorem 3.2.5 in \cite{Ciarlet02Book})
\begin{eqnarray}\label{Eqn:I-Ph}
&\| v-P_hv\|_0 \lesssim h \|v\|_1, \quad \forall v \in H_0^1(\Omega). 
\end{eqnarray}

For any $w_h \in V_h$, $v \in H_0^1(\Omega)$, by \eqref{Eqn:B}, Green formula, \eqref{Eqn:Ph}, Remark \ref{Rem:Assaf}, Cauchy-Schwarz inequality and \eqref{Eqn:I-Ph}, we have
\begin{eqnarray} \nonumber
B(u; w_h, v - P_h v) 
&=& (\boldsymbol{a}_y(u) w_h, \nabla (v - P_h v)) + (\boldsymbol{a}_{\boldsymbol{z}}(u)\nabla w_h, \nabla (v - P_h v))
\\ \nonumber
&& + (f_y(u) w_h,  v - P_h v) + (f_{\boldsymbol{z}}(u)\nabla w_h,  v - P_h v)
%
\\ \nonumber
&=& ( (\nabla \cdot  \boldsymbol{a}_y(u)) w_h,  v - P_h v) + (   \boldsymbol{a}_y(u) \cdot \nabla w_h,  v - P_h v)
\\ \nonumber
&& + (f_y(u) w_h,  v - P_h v) + (f_{\boldsymbol{z}}(u)\nabla w_h,  v - P_h v)
%
\\ \nonumber
&\lesssim& \|w_h\|_1 \|v-P_h v\|_0
%
\\ \label{Eqn:Buwhv-Phv}
&\lesssim& h \|w_h\|_1 \|v\|_1. 
\end{eqnarray}
Noticing that  $\mathcal{L}^{\prime}(u): {H}_{0}^{1}(\Omega) \rightarrow {H}^{-1}(\Omega)$ is an isomorphism, using \eqref{Eqn:Buwhv-Phv} and \eqref{Eqn:Phlesssim}, we obtain that
\begin{eqnarray*}
\|w_h\|_1 &\lesssim& \sup_{v \in H_0^1(\Omega)} \dfrac{B(u; w_h, v)}{\| v\|_1} 
%
\\
&\lesssim&  \sup_{v \in H_0^1(\Omega)} \dfrac{B(u; w_h, v- P_h v)}{\| v\|_1} + \sup_{v \in H_0^1{\Omega}} \dfrac{B(u; w_h,  P_h v)}{\| v\|_1}
\\
&\lesssim& h\|w_h\|_1 + \sup_{v \in H_0^1(\Omega)} \dfrac{B(u; w_h,  P_h v)}{\| P_h v\|_1}.
\end{eqnarray*}
Taking $h$ sufficiently small in the above inequality with projection operator $P_h$ being identity operator for $V_h$, we could obtain the first estimate of \eqref{Eqn:infsupBu}. The proof of the second estimate of \eqref{Eqn:infsupBu} is similar.
\end{proof}

Next, we provide another BB condition of the form $B(\cdot; \cdot, \cdot)$.

\begin{lemma}\label{Lem:infsupBuhkeHk}
Assume $u$ is the solution of \eqref{Eqn:Varu} and $\Psi $ satisfying $\|  u - \Psi \|_{1, \infty} \lesssim H$, then when $H$ is small enough, for any $w_h \in V_h$, it holds that
\begin{equation}\label{Eqn:infsupBuhkeHk}
\left\|w_{h}\right\|_{1} \lesssim \sup_{v_h \in V_h} \dfrac{B\left(\Psi;w_h, v_h\right)}{\|v_{h}\|_1}
\quad \text{and} \quad
\left\|w_{h}\right\|_{1} \lesssim \sup_{v_h \in V_h} \dfrac{B\left(\Psi;v_h, w_h\right)}{\|v_{h}\|_1}. 
\end{equation}
\end{lemma}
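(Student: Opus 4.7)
The plan is to treat $B(\Psi;\cdot,\cdot)$ as a perturbation of $B(u;\cdot,\cdot)$ and leverage the inf--sup condition already established in Lemma \ref{Lem:infsupBu}. Since the coefficients $\boldsymbol{a}_y,\boldsymbol{a}_{\boldsymbol{z}},f_y,f_{\boldsymbol{z}}$ are smooth functions of $(\boldsymbol{x},y,\boldsymbol{z})$ with uniformly bounded first derivatives (Remark \ref{Rem:Assaf}), a mean-value argument gives pointwise bounds such as
$$
|\boldsymbol{a}_y(\Psi)-\boldsymbol{a}_y(u)|\lesssim |u-\Psi|+|\nabla u-\nabla\Psi|,
$$
and analogously for $\boldsymbol{a}_{\boldsymbol{z}}$, $f_y$, $f_{\boldsymbol{z}}$. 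Combining these bounds with the definition \eqref{Eqn:B} and Cauchy--Schwarz, I expect to obtain the key perturbation estimate
$$
|B(\Psi;w_h,v_h)-B(u;w_h,v_h)|\lesssim \|u-\Psi\|_{1,\infty}\,\|w_h\|_1\|v_h\|_1\lesssim H\,\|w_h\|_1\|v_h\|_1,
$$
for any $w_h,v_h\in V_h$, where the assumption $\|u-\Psi\|_{1,\infty}\lesssim H$ is used in the last step.

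With this perturbation bound in hand, the first inequality of \eqref{Eqn:infsupBuhkeHk} follows by a standard triangle-inequality plus absorption argument. Namely, for any $w_h\in V_h$, write
$$
B(u;w_h,v_h)=B(\Psi;w_h,v_h)-\bigl(B(\Psi;w_h,v_h)-B(u;w_h,v_h)\bigr),
$$
divide by $\|v_h\|_1$, take the supremum over $v_h\in V_h$, and apply the inf--sup condition for $B(u;\cdot,\cdot)$ from Lemma \ref{Lem:infsupBu}. This gives
$$
\|w_h\|_1\lesssim \sup_{v_h\in V_h}\frac{B(\Psi;w_h,v_h)}{\|v_h\|_1}+C H\,\|w_h\|_1,
$$
and taking $H$ small enough so that $CH\le 1/2$ (say) lets me absorb the last term into the left-hand side. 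The second inequality in \eqref{Eqn:infsupBuhkeHk} follows by the exact same argument with the roles of the two arguments of $B$ interchanged, using the second estimate of \eqref{Eqn:infsupBu}.

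The main obstacle, and really the only nontrivial step, is cleanly bounding the perturbation $B(\Psi;w_h,v_h)-B(u;w_h,v_h)$ by $H\,\|w_h\|_1\|v_h\|_1$. Its tricky feature is the mismatch between a pointwise $W^{1,\infty}$ bound on $u-\Psi$ and the $H^1$ norms on $w_h,v_h$: one has to estimate four separate terms (corresponding to the four derivative components in \eqref{Eqn:B}), and for each one bound the $L^\infty$ norm of the coefficient difference by $\|u-\Psi\|_{1,\infty}$ while putting the $H^1$ factors onto $w_h$ and $v_h$ via Cauchy--Schwarz. Once this bookkeeping is done, the remainder of the argument is a routine smallness-of-$H$ absorption. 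Note that unlike Lemma \ref{Lem:infsupBu}, no duality/Aubin--Nitsche trick is needed here because the constant in \eqref{Eqn:infsupBu} is already available from the previous lemma.
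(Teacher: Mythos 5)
Your proposal is correct and follows essentially the same route as the paper: the paper likewise expands the coefficient differences $\boldsymbol{a}_y(u)-\boldsymbol{a}_y(\Psi)$, etc., by a Taylor (mean-value) argument with the bounded second derivatives from Remark \ref{Rem:Assaf}, obtains $|B(u;w_h,v_h)-B(\Psi;w_h,v_h)|\lesssim\|u-\Psi\|_{1,\infty}\|w_h\|_1\|v_h\|_1\lesssim H\|w_h\|_1\|v_h\|_1$, and then invokes Lemma \ref{Lem:infsupBu} with an absorption of the $H\|w_h\|_1$ term for $H$ small. No gaps.
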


\begin{proof}
Using the definition \eqref{Eqn:B} of form $B$, Taylor expansion
%
$
h(y, z) = h(y_0, z_0) + \partial_y h (   \tilde{\theta}_1, \tilde{\theta}_2 )(y-y_0) +   \partial_z h (\tilde{\theta}_1, \tilde{\theta}_2 )(z-z_0),
$
where $\tilde{\theta}_1$ is between $y$ and $y_0$ and $\tilde{\theta}_2$ is between $z$ and $z_0$, 
Remark \ref{Rem:Assaf}, 
and H\"{o}lder inequality, we obtain
\begin{eqnarray}\nonumber
\lefteqn{B(u; w_h, v_h) - B(\Psi; w_h, v_h)}
\\ \nonumber
&=& ((\boldsymbol{a}_y(u) - \boldsymbol{a}_y(\Psi)) w_h, \nabla v_h) + ((\boldsymbol{a}_{\boldsymbol{z}}(u) - \boldsymbol{a}_{\boldsymbol{z}}(\Psi)) \nabla w_h, \nabla v_h) 
\\ \nonumber
&&  + ((f_y(u) - f_y(\Psi)) w_h,  v_h) + ((f_{\boldsymbol{z}}(u) - f_{\boldsymbol{z}}(\Psi))\nabla w_h,  v_h)
%
\\ \nonumber
&=& (\boldsymbol{a}_{yy}(\theta_1) (u-\Psi) w_h, \nabla v_h) + (\boldsymbol{a}_{y \boldsymbol{z}}(\theta_1) \nabla(u-\Psi) w_h, \nabla v_h)
\\ \nonumber
&& + (\boldsymbol{a}_{\boldsymbol{z}y}(\theta_2) (u-\Psi) \nabla w_h, \nabla v_h)  + (\nabla(u-\Psi)^T\boldsymbol{a}_{\boldsymbol{zz}}(\theta_2)  \nabla w_h, \nabla v_h)
\\ \nonumber
&& +(f_{yy}(\theta_3) (u-\Psi) w_h,  v_h) + (f_{y\boldsymbol{z}}(\theta_3) \cdot \nabla(u-\Psi) w_h,  v_h)
\\ \nonumber
&& +(f_{\boldsymbol{z}y}(\theta_4) \cdot  \nabla w_h (u-\Psi),  v_h) + (\nabla(u-\Psi)^T f_{\boldsymbol{zz}}(\theta_4)   \nabla w_h,  v_h)
%
\\ \label{Eqn:Buwhvh}
&\lesssim &   \| u-\Psi\|_{1, \infty} \|w_h\|_1 \|v_h\|_1,
\end{eqnarray}
where $\theta_i\ (i = 1, 2, 3, 4)$ are between $u$ and $\Psi$. 

By Lemma \ref{Lem:infsupBu}, \eqref{Eqn:Buwhvh} and $\| u - \Psi\|_{1, \infty} \lesssim H$, it is obtained that
\begin{eqnarray}\nonumber
\|w_h\|_1 &\lesssim&   \sup_{v_h \in V_h} \dfrac{B(u; w_h, v_h) - B\left(\Psi; w_h, v_h\right)}{\|v_{h}\|_1}
+   \sup_{v_h \in V_h} \dfrac{B\left(\Psi;w_h, v_h\right)}{\|v_{h}\|_1}
%
\\  
\nonumber
&\lesssim&   \| u- \Psi\|_{1, \infty} \|w_h\|_1  +   \sup_{v_h \in V_h} \dfrac{B\left(\Psi;w_h, v_h\right)}{\|v_{h}\|_1}
%
\\  \nonumber
&\lesssim & H \|w_h\|_1  +  \sup_{v_h \in V_h} \dfrac{B\left(\Psi;w_h, v_h\right)}{\|v_{h}\|_1}. 
\end{eqnarray}
Taking $H$ sufficiently small into the above inequality, we can derive the first estimate of \eqref{Eqn:infsupBuhkeHk}. The proof of the second estimate of \eqref{Eqn:infsupBuhkeHk} is similar. 
\end{proof}

\begin{remark}\label{Rem:infsupBuh}
According to \eqref{Eqn:u-uh1p}, Lemma \ref{Lem:infsupBuhkeHk} still holds with replacing $\Psi$ by the finite element solution $u_h$ of \eqref{Eqn:uh}. 
\end{remark}

For more concise notations and the subsequent analysis, we denote
\begin{eqnarray}\label{Eqn:Ek}
&E_k = u_h - u_h^k, 
\\ \label{Eqn:uhk1}
& u_h^{k, 1} = u_h^k + e_H^k, 
\end{eqnarray}
where $u_h$ is the solution of problem \eqref{Eqn:uh} and, $u_h^k$ and $e_H^k$ are given by Algorithm \ref{algorithm}.
It's noted that these notation will be used frequently in the rest of this paper.

\begin{remark}\label{Rem:BBuhk+eHk}
For $k \geq 0$, assume that $E_k$, $u_h^{k, 1}$ and $e_H^k$ are given by \eqref{Eqn:Ek}, \eqref{Eqn:uhk1} and Algorithm \ref{algorithm}, respectively. 
If both $\| E_k\|_{1, \infty} \lesssim H$ and $\| e_H^k\|_{1, \infty} \lesssim H$ are provided, the Lemma \ref{Lem:infsupBuhkeHk} still holds  with replacing $\Psi$ by $u_h^{k, 1}$. 
In fact, using \eqref{Eqn:uhk1}, \eqref{Eqn:Ek}, triangle inequality, \eqref{Eqn:u-uh1p} with $r \geq 1$ and $h < H$, $\| E_k\|_{1, \infty} \lesssim H$ and $\| e_H^k\|_{1, \infty} \lesssim H$, we derive that 
$$
\| u - u_h^{k, 1}\|_{1, \infty}
\leq \|u - u_h \|_{1, \infty} +   \| E_k \|_{1, \infty} +  \| e_H^k \|_{1, \infty}
\lesssim H. 
$$
Therefore, the Lemma \ref{Lem:infsupBuhkeHk} still holds  with  $\Psi = u_h^{k, 1}$. 
\end{remark}

And then, for the finite element solution $u_h$ of \eqref{Eqn:uh} and any fixed $\boldsymbol{x} \in \Omega$, we introduce the Green functions $g_H^{\boldsymbol{x}} \in V_H$, which be defined by
\begin{eqnarray} \label{Eqn:gHz}
&B(u_h; v_H, g_H^{\boldsymbol{x}}) = \partial v_H(\boldsymbol{x}), \quad \forall  v_H \in V_H, 
\end{eqnarray}
where $\partial$ denotes either $\frac{\partial}{\partial x_{1}}$ or $\frac{\partial}{\partial x_{2}}$.
It's easy to see that the Green function $g_H^{\boldsymbol{x}}$ is well-defined by Remark \ref{Rem:infsupBuh}.

Assume $u_h^{k, 1}$ is given by \eqref{Eqn:uhk1}, similarly, for any fixed $\boldsymbol{x} \in \Omega$, we introduce the Green functions $g_h^{k, \boldsymbol{x}} \in V_h$ by
\begin{eqnarray}\label{Eqn:ghz}
&B(u_h^{k, 1}; v_h, g_h^{k, \boldsymbol{x}}) = \partial v_h(\boldsymbol{x}), \quad \forall  v_h \in V_h. 
\end{eqnarray}
By Remark \ref{Rem:BBuhk+eHk}, we also can see that Green function $g_h^{k, \boldsymbol{x}}$ is well-defined.

Here gives some estimates of the above two Green functions $g_H^{\boldsymbol{x}}$ and $g_h^{k, \boldsymbol{x}} $
(See Lemma 3.3 of \cite{ThomeeXu89:553}, or (2.10) and (2.11) of \cite{XuJC96:1759}) 
\begin{equation}\label{Eqn:ghz11}
\|g_H^{\boldsymbol{x}}\|_{1, 1} \lesssim \mid\log H\mid
\quad \text{and} \quad  
\|g_h^{k, \boldsymbol{x}}\|_{1, 1} \lesssim \mid\log h\mid . 
\end{equation}

At last, for any $v \in  H_0^1(\Omega) \cap W^{1, \infty}(\Omega)$, using (3.1.11) of \cite{Ciarlet02Book}, it could be obtained that
\begin{equation}\label{Eqn:1infty}
\| v \|_{1, \infty} \lesssim |   v|_{1, \infty}. 
\end{equation}

\subsection{Error estimate}

In this subsection, we present the convergence analysis of Algorithm \ref{algorithm} by a series of lemmas.

\begin{lemma}\label{Lem:Ek+11}
Assume $u_h^{k, 1}$, $E_{k}$ and $e_H^k$ are given by \eqref{Eqn:uhk1}, \eqref{Eqn:Ek} and Algorithm \ref{algorithm}, respectively, then we have, for any $v_h \in V_h$, 
\begin{eqnarray}\label{Eqn:Buhk111}
&B(u_h^{k, 1}; E_{k+1}, v_h) \lesssim (\| E_k\|_{1, \infty} + \| e_H^k\|_{1, \infty} )( \| E_k \|_{1} +  \|e_H^k\|_{1} ) \|v_h\|_1,
\\ \label{Eqn:Buhk1infty}
&B(u_h^{k, 1}; E_{k+1}, v_h) \lesssim (\| E_{k}\|_{1, \infty}^2 + \|e_H^k \|_{1, \infty}^2) \| v_h\|_{1, 1}. 
\end{eqnarray}
\end{lemma}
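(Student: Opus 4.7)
The plan is to recast $B(u_h^{k,1}; E_{k+1}, v_h)$ as a pure second-order Taylor remainder of $A$ and then bound it in two different ways. First I would combine the defining relation \eqref{Eqn:uhk+1} of Algorithm \ref{algorithm} with the discrete equation \eqref{Eqn:uh}: subtracting $B(u_h^{k,1}; u_h, v_h)$ from both sides of \eqref{Eqn:uhk+1} and using $A(u_h, v_h)=0$ produces
\[
B(u_h^{k,1}; E_{k+1}, v_h) = [A(u_h^{k,1}, v_h) - A(u_h, v_h)] - B(u_h^{k,1}; \delta, v_h),
\]
where $\delta := u_h^{k,1} - u_h$. Since $B(w;\cdot,\cdot)$ is by \eqref{Eqn:B} the Fr\'echet derivative of $A$ in its first argument, the fundamental theorem of calculus yields $A(u_h^{k,1}, v_h) - A(u_h, v_h) = \int_0^1 B(u_h + t\delta;\, \delta,\, v_h)\,dt$, and substitution gives
\[
B(u_h^{k,1}; E_{k+1}, v_h) = \int_0^1 \bigl[B(u_h + t\delta;\, \delta,\, v_h) - B(u_h + \delta;\, \delta,\, v_h)\bigr]\,dt.
\]

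At this point the integrand has exactly the structure of the difference $B(u;w_h,v_h)-B(\Psi;w_h,v_h)$ already treated in the proof of Lemma \ref{Lem:infsupBuhkeHk}. Applying a first-order Taylor expansion of the coefficients $\boldsymbol{a}_y$, $\boldsymbol{a}_{\boldsymbol{z}}$, $f_y$, $f_{\boldsymbol{z}}$ about $u_h+t\delta$ with increment $(1-t)\delta$, and invoking Remark \ref{Rem:Assaf} to bound the arising second-order derivatives of $\boldsymbol{a}$ and $f$, each integrand expands into a finite sum of terms of the form $(c(\xi)\,\delta\cdot\delta,\, v_h)$ or $(c(\xi)\,\delta\cdot\delta,\, \nabla v_h)$, in which $c$ collects a uniformly bounded coefficient and each factor ``$\delta$'' stands for either $\delta$ itself or $\nabla\delta$. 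To obtain \eqref{Eqn:Buhk111} I would apply H\"older's inequality with the split $L^\infty\cdot L^2\cdot L^2$, placing one copy of $\delta$ in $\|\cdot\|_{1,\infty}$, the other in $\|\cdot\|_1$, and $v_h$ in $\|\cdot\|_1$; to obtain \eqref{Eqn:Buhk1infty} I would instead use the split $L^\infty\cdot L^\infty\cdot L^1$, which absorbs both copies of $\delta$ in $\|\cdot\|_{1,\infty}$ and asks only for $\|v_h\|_{1,1}$ on the test-function side.

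The proof finishes by writing $\delta = e_H^k - E_k$ and applying the triangle inequality for $\|\cdot\|_{1,\infty}$ and $\|\cdot\|_1$ separately, together with the elementary inequality $(a+b)^2\lesssim a^2+b^2$ to arrive at the clean squared form in \eqref{Eqn:Buhk1infty}. I expect the main obstacle to be the opening algebraic manipulation: one must recognize that the linearized contribution $B(u_h^{k,1};\delta,v_h)$ cancels exactly against the $t=1$ value of the integrand, so that what remains is purely quadratic in $\delta$. Once that cancellation is in place, the subsequent eight-term H\"older bookkeeping is routine and mirrors the computation \eqref{Eqn:Buwhvh} already performed inside Lemma \ref{Lem:infsupBuhkeHk}.
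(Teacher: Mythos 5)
Your proposal is correct and follows essentially the same route as the paper: the same opening identity $B(u_h^{k,1};E_{k+1},v_h)=A(u_h^{k,1},v_h)-A(u_h,v_h)-B(u_h^{k,1};\delta,v_h)$ with $\delta=e_H^k-E_k$, cancellation of the linearized part so that only terms quadratic in $\delta$ remain, and the two H\"older splits ($L^\infty\cdot L^2\cdot L^2$ for \eqref{Eqn:Buhk111}, $L^\infty\cdot L^\infty\cdot L^1$ for \eqref{Eqn:Buhk1infty}) followed by the triangle inequality. The only difference is cosmetic: you express the quadratic remainder via the fundamental theorem of calculus and a mean-value expansion of the coefficients, whereas the paper uses a second-order Taylor expansion with Lagrange-type intermediate points $\theta_5,\theta_6$.
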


\begin{proof}
Using \eqref{Eqn:Ek}, Remark \ref{Rem:Bbilinear}, \eqref{Eqn:uhk+1}, \eqref{Eqn:uh} and \eqref{Eqn:A}, it is obtained that
\begin{eqnarray}\nonumber
B(u_h^{k, 1}; E_{k+1}, v_h) &=& B(u_h^{k, 1}; u_h, v_h) - B(u_h^{k, 1}; u_h^{k+1}, v_h)
%
\\ \nonumber
&=& B(u_h^{k, 1}; u_h, v_h) - B(u_h^{k, 1}; u_h^k + e_H^k, v_h)
+ A(u_h^{k, 1}, v_h) - A(u_h, v_h)
%
\\  \nonumber
&= &    B(u_h^{k, 1}; E_k - e_H^k, v_h)
+  (\boldsymbol{a} (u_h^{k, 1}, \nabla u_h^{k, 1}), v_h)
+ (f(u_h^{k, 1}, \nabla u_h^{k, 1}), v_h) 
\\  \nonumber
&&   - (\boldsymbol{a} (u_h, \nabla u_h), v_h) - (f(u_h, \nabla u_h), v_h)
%
\\ \label{Eqn:BEk+1}
&:=& A_1 - A_2 - A_3,
\end{eqnarray}
where
\begin{eqnarray*}
& A_1 = B(u_h^{k, 1}; E_k - e_H^k, v_h),
\\
& A_2 = (\boldsymbol{a} (u_h, \nabla u_h), v_h) -  (\boldsymbol{a} (u_h^{k, 1}, \nabla u_h^{k, 1}), v_h),
\\  
& A_3 = (f(u_h, \nabla u_h), v_h) - (f(u_h^{k, 1}, \nabla u_h^{k, 1}), v_h).
\end{eqnarray*}

For $A_1$, using the definition \eqref{Eqn:B} of $B$, we have
\begin{eqnarray}\nonumber
A_1 
&=&  
( \boldsymbol{a}_y(u_h^{k, 1})  (E_k - e_H^k), \nabla v_h) 
+ ( \boldsymbol{a}_{\boldsymbol{z}}(u_h^{k, 1}) \nabla (E_k - e_H^k), \nabla v_h)
\\ \label{Eqn:A1}
&& + ( f_y(u_h^{k, 1}) (E_k - e_H^k) ,  v_h) 
+ ( f_{\boldsymbol{z}}(u_h^{k, 1}) \nabla (E_k - e_H^k),  v_h). 
\end{eqnarray}

For $A_2$, using second order Taylor expansion, \eqref{Eqn:uhk1} and \eqref{Eqn:Ek}, we obtain
\begin{eqnarray}\nonumber
A_2 &=&( \boldsymbol{a}_y(u_h^{k, 1})  (E_k - e_H^k), \nabla v_h) 
+ ( \boldsymbol{a}_{\boldsymbol{z}}(u_h^{k, 1}) \nabla (E_k - e_H^k), \nabla v_h) 
\\ \nonumber
&&   + (\boldsymbol{a}_{yy}(\theta_5) (E_k - e_H^k)^2 , \nabla v_h) +  2 (\boldsymbol{a}_{y \boldsymbol{z}}(\theta_5) \nabla (E_k - e_H^k)  (E_k - e_H^k), \nabla v_h) 
\\ \label{Eqn:A2}
&&   + (\nabla(E_k - e_H^k)^T\boldsymbol{a}_{\boldsymbol{zz}}(\theta_5)  \nabla (E_k - e_H^k), \nabla v_h) , 
\end{eqnarray}
where $\theta_5$ is between $u_h$ and $u_h^{k, 1}$.

Similarly for $A_3$, using second order Taylor expansion, \eqref{Eqn:uhk1} and \eqref{Eqn:Ek}, it is obtained that 
\begin{eqnarray}\nonumber
A_3
&=&   ( f_y(u_h^{k, 1}) (E_k - e_H^k) ,  v_h) + ( f_{\boldsymbol{z}}(u_h^{k, 1}) \nabla (E_k - e_H^k),  v_h) 
\\ \nonumber
&&  +(f_{yy}(\theta_6) (E_k - e_H^k)^2,  v_h) + 2 (f_{y\boldsymbol{z}}(\theta_6) \cdot \nabla(E_k - e_H^k) (E_k - e_H^k),  v_h) 
\\ \label{Eqn:A3}
&&   + (\nabla(E_k - e_H^k)^T f_{\boldsymbol{zz}}(\theta_6)   \nabla (E_k - e_H^k),  v_h), 
\end{eqnarray}
where $\theta_6$ is between $u_h$ and $u_h^{k, 1}$.

Noticing the sum of the first order derivative items about $\boldsymbol{a}(\cdot, \cdot, \cdot)$ and $f(\cdot, \cdot, \cdot)$ in \eqref{Eqn:A2} and \eqref{Eqn:A3} exactly equal $A_1$. 
Substituting \eqref{Eqn:A1}, \eqref{Eqn:A2} and \eqref{Eqn:A3} into \eqref{Eqn:BEk+1}, it's obtained that
\begin{eqnarray} \nonumber
B(u_h^{k, 1}; E_{k+1}, v_h)
&=  &  - (\boldsymbol{a}_{yy}(\theta_5) (E_k - e_H^k)^2 , \nabla v_h) 
-  2 (\boldsymbol{a}_{y \boldsymbol{z}}(\theta_5) \nabla (E_k - e_H^k)  (E_k - e_H^k), \nabla v_h)
\\ \nonumber
&& 
   - (\nabla(E_k - e_H^k)^T\boldsymbol{a}_{\boldsymbol{zz}}(\theta_5)  \nabla (E_k - e_H^k), \nabla v_h)
 - (f_{yy}(\theta_6) (E_k - e_H^k)^2,  v_h) 
\\ \nonumber
&& 
-  2 (f_{y\boldsymbol{z}}(\theta_6) \cdot \nabla(E_k - e_H^k) (E_k - e_H^k),  v_h)
\\  \label{Eqn:Buhk1Ek+1lesssim}
&& - (\nabla(E_k - e_H^k)^T f_{\boldsymbol{zz}}(\theta_6)   \nabla (E_k - e_H^k),  v_h). 
\end{eqnarray}
Applying Remark \ref{Rem:Assaf}, H\"{o}lder inequality and triangle inequality into \eqref{Eqn:Buhk1Ek+1lesssim}, we could obtain
\begin{eqnarray*}
B(u_h^{k, 1}; E_{k+1}, v_h) &\lesssim& \| E_k - e_H^k\|_{1, \infty} \| E_k - e_H^k\|_{1} \|v_h\|_1
\\
&\leq& (\| E_k\|_{1, \infty} + \| e_H^k\|_{1, \infty} )( \| E_k \|_{1} +  \|e_H^k\|_{1} ) \|v_h\|_1,
\end{eqnarray*}
which completes the proof of \eqref{Eqn:Buhk111}. 
Similarly,  we could obtain \eqref{Eqn:Buhk1infty} by \eqref{Eqn:Buhk1Ek+1lesssim}.
\end{proof}

\begin{lemma}\label{Lem:Buhk1eHkvH}
Assume that $u_h^{k, 1}$, $e_H^k$ and $E_k$ are defined by \eqref{Eqn:uhk1}, Algorithm \ref{algorithm} and \eqref{Eqn:Ek}, respectively, then we have
\begin{equation}\label{Eqn:Buhk1eHkvH}
B(u_h^{k, 1}; e_H^k, v_H) \lesssim (\|E_k\|_1 + \|E_{k+1}\|_1 ) \|v_H\|_1, \quad \forall v_H \in V_H. 
\end{equation}
\end{lemma}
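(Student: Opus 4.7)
The plan is to derive an exact identity from the two equations that define Algorithm \ref{algorithm} and then invoke only the continuity \eqref{Eqn:Bcontinuous} of $B(u_h^{k,1};\cdot,\cdot)$; no Taylor expansion will be needed.

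First, I would test Step 2 of Algorithm \ref{algorithm} against $v_H \in V_H \subset V_h$ and, using the bilinearity of $B(u_h^{k,1};\cdot,\cdot)$ from Remark \ref{Rem:Bbilinear}, rewrite it as
\begin{equation*}
B\bigl(u_h^{k,1}; u_h^{k,1} - u_h^{k+1}, v_H\bigr) = A(u_h^{k,1}, v_H).
\end{equation*}
Step 1 of Algorithm \ref{algorithm} asserts $A(u_h^k + e_H^k, v_H) = 0$ for every $v_H \in V_H$, and since $u_h^{k,1} = u_h^k + e_H^k$ by \eqref{Eqn:uhk1}, the right-hand side vanishes.

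Next, I would unfold $u_h^{k,1} - u_h^{k+1}$ via \eqref{Eqn:Ek} and \eqref{Eqn:uhk1}. Inserting $\pm u_h$ gives
\begin{equation*}
u_h^{k,1} - u_h^{k+1} = (u_h^k - u_h) + e_H^k + (u_h - u_h^{k+1}) = e_H^k - E_k + E_{k+1}.
\end{equation*}
Combined with the identity above, bilinearity of $B(u_h^{k,1};\cdot,\cdot)$ rearranges to
\begin{equation*}
B(u_h^{k,1}; e_H^k, v_H) = B(u_h^{k,1}; E_k - E_{k+1}, v_H).
\end{equation*}

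Finally, the continuity \eqref{Eqn:Bcontinuous} of $B(u_h^{k,1};\cdot,\cdot)$ together with the triangle inequality yields
\begin{equation*}
B(u_h^{k,1}; e_H^k, v_H) \lesssim \|E_k - E_{k+1}\|_1 \|v_H\|_1 \leq (\|E_k\|_1 + \|E_{k+1}\|_1)\|v_H\|_1,
\end{equation*}
which is the claim. In contrast to the proof of Lemma \ref{Lem:Ek+11}, there are no higher-order Taylor remainders to absorb: the two-grid identity is exact once we restrict the test space to $V_H$. The only step requiring care is the algebraic accounting for $u_h^{k,1} - u_h^{k+1}$; the main conceptual point, and the reason the estimate comes out so clean, is that Step 1 of Algorithm \ref{algorithm} annihilates $A(u_h^{k,1},\cdot)$ on $V_H$, which is precisely the purpose of the coarse-grid nonlinear solve.
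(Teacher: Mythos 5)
Your proof is correct and essentially identical to the paper's: both test Step 2 with $v_H\in V_H\subset V_h$, use Step 1 to annihilate the $A(u_h^{k,1},\cdot)$ term, rewrite $u_h^{k+1}-u_h^k$ (equivalently $u_h^{k,1}-u_h^{k+1}$) as a combination of $E_k$, $E_{k+1}$, $e_H^k$ by inserting $\pm u_h$, and conclude with the continuity \eqref{Eqn:Bcontinuous} and the triangle inequality. No gaps.
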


\begin{proof}
Taking $v_h = v_H$ into \eqref{Eqn:uhk+1} and using \eqref{Eqn:eHk}, we obtain
$$
B(u_h^{k, 1}; u_h^{k+1}, v_H) = B(u_h^{k, 1}; u_h^{k} + e_H^{k}, v_H). 
$$
Rewriting the the above equation with Remark \ref{Rem:Bbilinear}, and then using \eqref{Eqn:Ek}, \eqref{Eqn:Bcontinuous} and triangle inequality, we have
\begin{eqnarray} \nonumber
B(u_h^{k, 1};  e_H^{k}, v_H) &=& B(u_h^{k, 1}; u_h^{k+1} - u_h^{k}, v_H)
%
\\ \nonumber
&=& B(u_h^{k, 1}; u_h^{k+1} - u_h + u_h - u_h^{k}, v_H)
\\ \nonumber
&=& B(u_h^{k, 1}; E_k  - E_{k+1}, v_H)
%
\\  \nonumber
&\lesssim & \| E_{k} - E_{k+1}\|_1 \|v_H\|_1
%
\\ \nonumber
&\leq & \left( \| E_{k}\|_1  + \|E_{k+1}\|_1 \right) \|v_H\|_1, 
\end{eqnarray}
which completes the proof. 
\end{proof}

\begin{lemma}\label{Lem:last}
Assume that $E_k$ and $e_H^k$ are given by \eqref{Eqn:Ek} and  Algorithm \ref{algorithm}, respectively, and $r \geq 1$, when $h$ is small enough, then for any integer $k\geq 1$, 
\begin{eqnarray}\label{Eqn:last}
\|E_{k}\|_1 \lesssim H^{r+k}, 
\quad \|E_{k}\|_{1, \infty} \lesssim \mid\log h\mid H^2, 
\quad \|e_H^{k}\|_{1, \infty} \lesssim  H, 
\quad \|e_H^k\|_1 \lesssim H^{r+k}. 
\end{eqnarray}
\end{lemma}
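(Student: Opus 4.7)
The plan is to prove all four estimates simultaneously by induction on $k\ge 1$. For the base case $k=1$, Remark \ref{Rem:VS} gives $e_H^0 = 0$ and $u_h^{0,1} = u_H$, so the first iteration of Algorithm \ref{algorithm} coincides with Algorithm \ref{Alg}. Lemma \ref{Lem:uh} and the triangle inequality yield $\|E_0\|_1,\|E_0\|_{1,\infty}\lesssim H^r\lesssim H$, so Lemma \ref{Lem:infsupBuhkeHk} applies with $\Psi = u_H$ on $V_h$. Combining that BB condition with Lemma \ref{Lem:Ek+11} at $k=0$ gives $\|E_1\|_1\lesssim \|E_0\|_{1,\infty}\|E_0\|_1\lesssim H^{2r}\lesssim H^{r+1}$, while plugging $v_h = g_h^{0,\boldsymbol{x}}$ into the second estimate of Lemma \ref{Lem:Ek+11} and using \eqref{Eqn:ghz11}, \eqref{Eqn:1infty} delivers $\|E_1\|_{1,\infty}\lesssim H^{2r}|\log h|\lesssim |\log h|H^2$. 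The bounds on $e_H^1$ then come from the inductive argument for $e_H^{k+1}$ in the third paragraph below applied with $k+1 = 1$.

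For the inductive step, assume the four bounds hold at level $k$. The $L^\infty$ hypotheses make Remark \ref{Rem:BBuhk+eHk} applicable, giving the BB condition for $B(u_h^{k,1};\cdot,\cdot)$ on $V_h$. Combined with the first estimate of Lemma \ref{Lem:Ek+11} this yields
$$
\|E_{k+1}\|_1 \lesssim \bigl(\|E_k\|_{1,\infty}+\|e_H^k\|_{1,\infty}\bigr)\bigl(\|E_k\|_1+\|e_H^k\|_1\bigr)\lesssim H\cdot H^{r+k} = H^{r+k+1}.
$$
Substituting $v_h = g_h^{k,\boldsymbol{x}}$ into the second estimate of Lemma \ref{Lem:Ek+11} and using \eqref{Eqn:ghz11}, \eqref{Eqn:1infty} gives $\|E_{k+1}\|_{1,\infty}\lesssim \bigl(\|E_k\|_{1,\infty}^2+\|e_H^k\|_{1,\infty}^2\bigr)|\log h|\lesssim |\log h|H^2$.

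Next I turn to $e_H^{k+1}$. Subtracting $A(u_h,v_H)=0$ from the Step 1 identity $A(u_h^{k+1}+e_H^{k+1},v_H)=0$ and Taylor-expanding around $u_h$ produces
$$
B(u_h;e_H^{k+1},v_H) = B(u_h;E_{k+1},v_H) - R_{k+1}(v_H),
$$
where the second-order remainder $R_{k+1}$ satisfies both $|R_{k+1}(v_H)|\lesssim \|\epsilon\|_{1,\infty}^{2}\|v_H\|_{1,1}$ and $|R_{k+1}(v_H)|\lesssim \|\epsilon\|_{1,\infty}\|\epsilon\|_1\|v_H\|_1$ for $\epsilon = e_H^{k+1}-E_{k+1}$. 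Setting $v_H = g_H^{\boldsymbol{x}}$, using $\|E_{k+1}\|_{1,\infty}\lesssim |\log h|H^2$, \eqref{Eqn:ghz11}, \eqref{Eqn:1infty}, and absorbing the resulting self-quadratic term $|\log h|\,\|e_H^{k+1}\|_{1,\infty}^2$ by a bootstrap (valid for $H$ small) delivers $\|e_H^{k+1}\|_{1,\infty}\lesssim |\log h|^{2}H^{2}\lesssim H$. Then the $V_H$-analog of Lemma \ref{Lem:infsupBu} and Lemma \ref{Lem:infsupBuhkeHk} with $\Psi = u_h$ (admissible since $\|u-u_h\|_{1,\infty}\lesssim h^r\lesssim H$), the $H^1$-form of $R_{k+1}$, and absorption of an $H\|e_H^{k+1}\|_1$ contribution yield $\|e_H^{k+1}\|_1\lesssim \|E_{k+1}\|_1\lesssim H^{r+k+1}$.

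The principal obstacle is the bootstrap in the preceding paragraph: $\|e_H^{k+1}\|_{1,\infty}$ appears quadratically in its own upper bound through the interplay of the second-order Taylor remainder with $\|g_H^{\boldsymbol{x}}\|_{1,1}\lesssim |\log H|$, and closure requires the smallness of $H$ together with a delicate balance of the $|\log h|$ factors. This intricate coupling of the four unknowns across the two steps of the algorithm is precisely the ``four coupled error estimates'' difficulty flagged in the introduction as the main advance over the semilinear case of \cite{ZhangWFFanRH20:522}.
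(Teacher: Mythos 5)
Your treatment of the two $E_{k+1}$ estimates follows the paper's proof (BB condition from Remark \ref{Rem:BBuhk+eHk} plus Lemma \ref{Lem:Ek+11}, then the discrete Green function $g_h^{k,\boldsymbol{x}}$), and your $H^1$ bound for $e_H^{k+1}$ via linearization at $u_h$ is a legitimate variant of the paper's use of Lemma \ref{Lem:Buhk1eHkvH}; the linear absorption of the $H\|e_H^{k+1}\|_1$ term there is fine. The genuine gap is the step you yourself flag as the ``principal obstacle'': the bound $\|e_H^{k+1}\|_{1,\infty}\lesssim H$. Testing your identity with $v_H=g_H^{\boldsymbol{x}}$ gives, with $x=\|e_H^{k+1}\|_{1,\infty}$, an inequality of the form
\begin{equation*}
x \;\le\; C_1\,|\log H|\,|\log h|\,H^2 \;+\; C_2\,|\log H|\,x^2 ,
\end{equation*}
and a ``bootstrap'' cannot close this: an inequality $x\le a+Cx^2$ does not imply $x\lesssim a$ unless you already know a priori that $x$ is small (say $x\le 1/(2C)$), since large solutions $x\approx 1/C$ are not excluded. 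Here $e_H^{k+1}$ is a single fixed discrete function, not a member of a continuous family starting from a small value, so there is no continuation parameter to bootstrap along, and no a priori $L^\infty$ smallness of $e_H^{k+1}$ has been established at that point in the induction. You have not supplied the missing ingredient, and ``valid for $H$ small'' does not substitute for it.

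This is exactly why the paper does not argue this way: it proves $\|e_H^{k}\|_{1,\infty}\lesssim H$ in Lemma \ref{Lem:eHk1infty} (Appendix) by a Brouwer fixed point argument. One defines the map $\Phi:V_H\to V_H$ in \eqref{Eqn:BPhiwH} and the set $Q_H=\{v_H:\|v_H-\hat P_H E_k\|_{1,\infty}\le H\}$, shows $\Phi(Q_H)\subset Q_H$ (Lemma \ref{Lem:subset}) and continuity of $\Phi$ (Lemma \ref{Lem:continuous}), obtains a fixed point $\bar w_H\in Q_H$, and identifies $\bar w_H=e_H^k$ through the uniqueness of the Step~1 solution; membership in $Q_H$ then yields the a priori $L^\infty$ control, after which the triangle inequality and \eqref{Eqn:hatPH1infty} give $\|e_H^k\|_{1,\infty}\lesssim H$. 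To repair your proof you must either reproduce such a fixed-point (or continuation) argument, or otherwise establish the a priori bound $\|e_H^{k+1}\|_{1,\infty}\le c/|\log H|$ before absorbing the quadratic term; as written, the induction does not close. (You also tacitly assume existence and uniqueness of the Step~1 solution $e_H^{k+1}$, which the fixed-point construction simultaneously addresses.)
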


\begin{proof}
Here we use mathematical induction to prove that \eqref{Eqn:last} is true.

By \eqref{Eqn:eHk}, $u_h^0 = u_H$, \eqref{Eqn:uh} and the uniqueness of finite element solution (See Lemma \ref{Lem:uh}), it's could be seen that $e_H^0 = 0$. 

Making use of triangle inequality, \eqref{Eqn:u-uh1p} and $h\leq H$, we have
\begin{eqnarray} \label{Eqn:E01}
&\|E_0\|_1 \leq \|u-u_h\|_1 + \| u - u_H\|_1 \lesssim h^r + H^ r \leq H^r, 
\\ \label{Eqn:E01infty}
&\|E_0\|_{1, \infty} \leq \|u-u_h\|_{1, \infty} + \| u - u_H\|_{1, \infty} 
\lesssim  h^r +  H^r
\leq H^r.
\end{eqnarray}

Next, we will prove \eqref{Eqn:last} is true when $k = 1$. 

(i) For $\|E_{1}\|_1 \lesssim H^{r+1}$. 
Noticing that $r\geq 1$ and $e_H^0 = 0$,
and using \eqref{Eqn:E01infty}, we have $\|E_0\|_{1, \infty} \lesssim H$ and $\|e_H^0\|_{1, \infty} \lesssim H$ , which could derive the BB condition of form $B(u_h^{0, 1}; \cdot, \cdot)$ (See Remark \ref{Rem:BBuhk+eHk}). 
Using the BB condition of form $B(u_h^{0, 1}; \cdot, \cdot)$, \eqref{Eqn:Buhk111}, \eqref{Eqn:E01infty}, $e_H^k = 0$,
\eqref{Eqn:E01}, $r \geq 1$ and $H < 1$, it's obtained that
\begin{eqnarray} \nonumber
\|E_1\|_1 
&\lesssim&  \sup\limits_{v_h \in V_h} \dfrac{B(u_h^{0, 1}; E_1, v_h)}{\|v_h\|_1}
%
\\ \nonumber
&\lesssim&  
 (\|E_0\|_{1, \infty} + \|e_H^0\|_{1, \infty})(\|E_0\|_1 + \|e_H^0\|_1) 
 %
\\   \nonumber
&\lesssim& (H^{r} + 0)( H^{r} + 0)
%
\\ \label{Eqn:E1}
& \lesssim& H^{r+1}.
\end{eqnarray}

(ii)  For $\|E_{1}\|_{1, \infty} \lesssim \mid\log h\mid H^2$.
For $k=0$ and any fixed $\boldsymbol{x} \in \Omega$, taking $v_h = E_1$ into \eqref{Eqn:ghz}, using \eqref{Eqn:Buhk1infty}, \eqref{Eqn:E01infty}, $e_H^0 = 0$, \eqref{Eqn:ghz11}, $r \geq 1$ and $H < 1$, we obtain 
\begin{eqnarray*}
\partial E_{1}(\boldsymbol{x}) & = & B(u_h^{0, 1}; E_{1}, g_h^{0, \boldsymbol{x}})
%
\\
&\lesssim& (\| E_0 \|_{1, \infty}^2  + \|e_H^0\|_{1, \infty}^2) \|g_h^{0, \boldsymbol{x}}\|_{1, 1}
%
\\
&\lesssim& (H^{2r} +0) \mid\log h\mid
%
\\
&\lesssim& \mid\log h\mid H^2.  
\end{eqnarray*}
Further using the arbitrariness of $\boldsymbol{x}$ and \eqref{Eqn:1infty}, we derive that
$$
\|E_{1}\|_{1, \infty} \lesssim \mid\log h\mid H^2. 
$$

(iii) For $\|e_H^1\|_{1, \infty} \lesssim H$. 
%
Using $\| E_1\|_{1, \infty} \lesssim \mid\log h\mid H^2$ and Lemma \ref{Lem:eHk1infty} (The specific content of lemma and proof are referred to Appendix), we obtain
$$
\|e_H^1\|_{1, \infty} \lesssim H. 
$$

(iv) For $\|e_H^1\|_1  \lesssim H^{r+1} $.
Noticing that $\|E_1\|_{1, \infty} \lesssim \mid\log h\mid H^2$ and $\|e_H^1\|_{1, \infty} \lesssim H$ are satisfied, therefore the BB condition of form $B(u_h^{1, 1}; \cdot, \cdot)$ holds (See Remark \ref{Rem:BBuhk+eHk}).
Using the BB condition of $B(u_h^{1, 1}; \cdot, \cdot)$ and \eqref{Eqn:Buhk111}, it's obtained that
\begin{eqnarray} \nonumber
\|E_2\|_1 
&\lesssim& \sup\limits_{v_h} \dfrac{B(u_h^{1, 1}; E_2, v_h)}{\|v_h\|_1}
%
\\ \label{Eqn:E21}
&\lesssim& (\| E_1\|_{1, \infty} + \| e_H^1\|_{1, \infty} )( \| E_1 \|_{1} +  \|e_H^1\|_{1} ). 
\end{eqnarray}

Using the BB condition of form $B(u_h^{1, 1}; \cdot, \cdot)$, \eqref{Eqn:Buhk1eHkvH} with $k = 1$, \eqref{Eqn:E21}, $\|E_1\|_{1, \infty} \lesssim \mid\log h\mid H^2$ and $\|e_H^1\|_{1, \infty} \lesssim H$, we have
\begin{eqnarray*}
\|e_H^1\|_1 
&\lesssim& \sup\limits_{v_H \in V_H} \dfrac{B(u_h^{1, 1}; e_H^1, v_H)}{\|v_H\|_1}
%
\\
&\lesssim& \| E_1\|_1  + \|E_2\|_1
%
\\
&\lesssim& \| E_1\|_1  + (\| E_1\|_{1, \infty} + \| e_H^1\|_{1, \infty} ) ( \| E_1 \|_{1} +  \|e_H^1\|_{1} )
%
\\
&\lesssim& \| E_1\|_1  + (\mid\log h\mid H^2 + H) ( \| E_1 \|_{1} +  \|e_H^1\|_{1} ). 
\end{eqnarray*}
Taking $H$ be small enough in the above inequality, and using \eqref{Eqn:E1}, it's obtained that
$$
\|e_H^1\|_1  \lesssim \| E_1\|_1 \lesssim H^{r+1}.
$$

We assume \eqref{Eqn:last} is true when $k=l$, i.e., 
\begin{eqnarray}\label{Eqn:lastk=l}
\|E_l\|_1 \lesssim  H^{r+l}, 
\quad \|E_l\|_{1, \infty} \lesssim \mid\log h\mid H^{2}, 
\quad \|e_H^l\|_{1, \infty} \lesssim  H, 
\quad \|e_H^l\|_1  \lesssim H^{r+l}. 
\end{eqnarray}
Next, we will prove \eqref{Eqn:last} also holding when $k=l+1$.

(i) For $\|E_{l+1}\|_1 \lesssim H^{r+l+1}$. 
Noticing that  $\|E_l\|_{1, \infty} \lesssim \mid\log h\mid H^2 $ and $\|e_H^l\|_{1, \infty} \lesssim H$ are satisfied, therefore the BB condition of form $B(u_h^{l, 1}; \cdot, \cdot)$ holds (See Remark \ref{Rem:BBuhk+eHk}). 
Using the BB condition of form $B(u_h^{l, 1}; \cdot, \cdot)$, \eqref{Eqn:Buhk111}, \eqref{Eqn:lastk=l} and $H < 1$, we obtain
\begin{eqnarray} \nonumber
\|E_{l+1}\|_1 
&\lesssim&  \sup\limits_{v_h \in V_h} \dfrac{B(u_h^{l, 1}; E_{l+1}, v_h)}{\|v_h\|_1}
%
\\ \nonumber
&\lesssim&  
 (\|E_l\|_{1, \infty} + \|e_H^l\|_{1, \infty})(\|E_l\|_1 + \|e_H^l\|_1) 
 %
\\   \nonumber
&\lesssim& (\mid\log h\mid H^2 + H) (H^{r+l} + H^{r+l})
%
\\ \label{Eqn:El+1}
& \lesssim& H^{r+l+1}.
\end{eqnarray}

(ii) For $\|E_{l+1}\|_{1, \infty} \lesssim \mid\log h\mid H^2$.
Taking $v_h = E_{l+1}$ into \eqref{Eqn:ghz} with $k=l$, using \eqref{Eqn:Buhk1infty}, \eqref{Eqn:lastk=l} and \eqref{Eqn:ghz11}, we obtain 
\begin{eqnarray*}
\partial E_{l+1}(\boldsymbol{x}) & = & B(u_h^{l, 1}; E_{l+1}, g_h^{l, \boldsymbol{x}})
%
\\
&\lesssim& (\| E_l \|_{1, \infty}^2 +\| e_H^l\|_{1, \infty}^2 )\|g_h^{l, \boldsymbol{x}}\|_{1, 1}
%
\\
&\lesssim& ( \mid\log h\mid^2 H^{4} +  H^{2}) \mid\log h\mid 
%
\\
&\lesssim& \mid\log h\mid H^2, 
\end{eqnarray*}
which combining the arbitrariness of $\boldsymbol{x}$ and \eqref{Eqn:1infty}, it could be derived that
$$
\|E_{l+1}\|_{1, \infty} \lesssim \mid\log h\mid H^2. 
$$

(iii) For $\|e_H^{l+1}\|_{1, \infty} \lesssim H$. Using $\| E_{l+1}\|_{1, \infty} \lesssim \mid\log h\mid H^2$ and Lemma \ref{Lem:eHk1infty}, we obtain 
\begin{eqnarray}\label{Eqn:eHl1infty}
\|e_H^{l+1}\|_{1, \infty} \lesssim H. 
\end{eqnarray}

(iv) For $\|e_H^{l+1}\|_1 \lesssim H^{r+l+1}$. 
Noticing that $\|E_{l+1}\|_{1, \infty} \lesssim \mid\log h\mid H^2$ and $\|e_H^{l+1}\|_{1, \infty} \lesssim H$ are satisfied, therefore the BB condition of form $B(u_h^{l+1, 1}; \cdot, \cdot)$ holds (See Remark \ref{Rem:BBuhk+eHk}).
Using the BB condition of form $B(u_h^{l+1, 1}; \cdot, \cdot)$, \eqref{Eqn:Buhk111}, it's obtained that
\begin{eqnarray} \nonumber
\|E_{l+2}\|_1 
&\lesssim& \sup\limits_{v_h} \dfrac{B(u_h^{l+1, 1}; E_{l+2}, v_h)}{\|v_h\|_1}
%
\\ \label{Eqn:El+21}
&\lesssim& (\| E_{l+1}\|_{1, \infty} + \| e_H^{l+1}\|_{1, \infty} )( \| E_{l+1} \|_{1} +  \|e_H^{l+1}\|_{1} ). 
\end{eqnarray}

Using the BB condition of form $B(u_h^{l+1, 1}; \cdot, \cdot)$, \eqref{Eqn:Buhk1eHkvH} with $k = l+1$, \eqref{Eqn:El+21}, $\|E_{l+1}\|_{1, \infty} \lesssim \mid\log h\mid H^2$ and $\|e_H^{l+1}\|_{1, \infty} \lesssim H$, we have
\begin{eqnarray*}
\|e_H^{l+1}\|_1 
&\lesssim& \sup\limits_{v_H \in V_H} \dfrac{B(u_h^{l+1, 1}; e_H^{l+1}, v_H)}{\|v_H\|_1}
%
\\
&\lesssim& \| E_{l+1}\|_1  + \|E_{l+2}\|_1
%
\\
&\lesssim& \| E_{l+1}\|_1  + (\| E_{l+1}\|_{1, \infty} + \| e_H^{l+1}\|_{1, \infty} )( \| E_{l+1} \|_{1} +  \|e_H^{l+1}\|_{1} )
%
\\
&\lesssim& \| E_{l+1}\|_1  + (\mid\log h\mid  H^2 + H) ( \| E_{l+1} \|_{1} +  \|e_H^{l+1}\|_{1} ). 
\end{eqnarray*}
Taking $H$ be small enough in the above inequality and using \eqref{Eqn:El+1}, it's obtained that
$$
\|e_H^{l+1}\|_1    \lesssim \| E_{l+1}\|_1   \lesssim H^{r+l+1}.
$$
By mathematical induction, the conclusion is obtained.
\end{proof}

\begin{remark}
Although we just use the estimation $ \|E_{k+1}\|_1 \lesssim H^{r+k+1}$ in our main result (See Theorem \ref{Thm:last}), the availability of $\|E_{k+1}\|_1 \lesssim H^{r+k+1}$ requires the support of 
$\|E_{k}\|_{1, \infty} \lesssim \mid\log h\mid  H^2$, 
$\|e_H^{k}\|_{1, \infty} \lesssim  H$ 
and  $\|e_H^k\|_1 \lesssim H^{r+k}$. 
\end{remark}


Here gives the main result of this paper.

\begin{theorem}\label{Thm:last}
Assume that $u$ is the solution of \eqref{Eqn:Varu} and $u_h^k$ is given by Algorithm \ref{algorithm}, then we have
\begin{equation}\label{Eqn:L}
\| u- u_h^{k}\|_1 \lesssim h^r + H^{r+k}. 
\end{equation}
\end{theorem}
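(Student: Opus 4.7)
The plan is to derive \eqref{Eqn:L} as a direct corollary of the error decomposition
\[
u - u_h^k = (u - u_h) + (u_h - u_h^k) = (u - u_h) + E_k,
\]
combined with the triangle inequality in the $H^1$-norm. The first summand is controlled by the standard a priori estimate \eqref{Eqn:u-uh1p} from Lemma \ref{Lem:uh} (taken with $p=2$), giving $\|u - u_h\|_1 \lesssim h^r$. The second summand is exactly the quantity bounded in Lemma \ref{Lem:last}, where it is shown that $\|E_k\|_1 \lesssim H^{r+k}$ for every integer $k \geq 1$. Adding these two bounds immediately yields the claim \eqref{Eqn:L}.

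For the degenerate case $k = 0$, one notes that by Remark \ref{Rem:VS} we have $u_h^0 = u_H$, so $\|u - u_h^0\|_1 = \|u - u_H\|_1 \lesssim H^r$, which is consistent with the right-hand side of \eqref{Eqn:L} since $h < H$. For $k \geq 1$, the two-term triangle-inequality argument above suffices.

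The essential mathematical content of the theorem has already been absorbed into Lemma \ref{Lem:last}, whose proof by induction had to track the four coupled quantities $\|E_k\|_1$, $\|E_k\|_{1,\infty}$, $\|e_H^k\|_{1,\infty}$, and $\|e_H^k\|_1$ simultaneously in order to maintain the BB condition of $B(u_h^{k,1};\cdot,\cdot)$ at each step. Once that lemma is in hand, there is no further obstacle: the proof of the theorem itself is essentially a one-line application of the triangle inequality to the two clean estimates $\|u - u_h\|_1 \lesssim h^r$ and $\|E_k\|_1 \lesssim H^{r+k}$. The only minor bookkeeping point is to verify that the hypotheses required for Lemma \ref{Lem:last} (namely $r \geq 1$ and $h$ sufficiently small) are consistent with the assumptions of Theorem \ref{Thm:last}, which they are.
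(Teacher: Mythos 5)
Your proposal is correct and matches the paper's own argument: the paper proves Theorem \ref{Thm:last} by exactly the same decomposition $u-u_h^k=(u-u_h)+E_k$, the triangle inequality, the a priori estimate \eqref{Eqn:u-uh1p}, and Lemma \ref{Lem:last}. The extra remark on the $k=0$ case is harmless but not needed, since the heavy lifting is indeed entirely in Lemma \ref{Lem:last}.
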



\begin{proof}
Using triangle inequality, \eqref{Eqn:Ek}, \eqref{Eqn:u-uh1p} and Lemma \ref{Lem:last}, we could obtain that
$$
\| u- u_h^{k}\|_1 \leq \| u- u_h\|_1 + \| E_k\|_1 \lesssim  h^r + H^{r+k},
$$
which completes the proof. 
\end{proof}

\section{Numerical experiments}\label{Sec:numerical}

In this section, we present some numerical experiments to show the efficiency of the proposed iterative two-grid algorithm.
We implemented these experiments by the software package FEALPy of programming language Python \cite{WeiHYHuangYQFEALPy}.
Specially in the Step 1 of Algorithm \ref{algorithm}, we solve the nonlinear systems by Newton iteration methods with relative residual $10^{-8}$.

We adopt the following mean curvature flow problem as our model problem:
\begin{eqnarray*}
- \nabla \cdot \left( \dfrac{\nabla u}{(1+ \mid\nabla u\mid^2)^{1/2}} \right)=g \text { in } \Omega, \quad u=0 \text { on } \partial \Omega, 
\end{eqnarray*}
where the computational domain $\Omega = (0, 1) \times (0, 1)$, the exact solution  $u= x(1-x)^2y(1-y)\mathrm{e}^x$, and $g$  is so chosen according to the exact solution.

Firstly, we choose conforming piecewise linear finite element space as $V_h$, namely choose $r=1$. 
According to Theorem \ref{Thm:last}, we should keep $h^r = H^{r+k}$ hold in order to achieve the optimal convergence order.
Therefore in Table \ref{Tab:5-1}, we present some numerical results in different mesh size with $h = H^2$ for $k=1$. 
In this case, our algorithm is same with Algorithm \ref{Alg} (See Remark \ref{Rem:VS}). 
Furthermore, we could observe that $ \| u - u_h^1 \|_1*\max\{H^{2},h\}^{-1}$ are stable in Table \ref{Tab:5-1}, which agrees with \eqref{Eqn:L} in Theorem \ref{Thm:last}.
\begin{table}[htbp]
	\centering\caption{$k=1, r=1$}
	\label{Tab:5-1}\vskip 0.1cm
	\begin{tabular}{{ccccc}}\hline
		$H$   &$h$    &$\| u - u_h^1 \|_1$  &$ \| u - u_h^1 \|_1*\max\{H^{2},h\}^{-1}$ \\ \hline
1/8  & 1/64  & 3.47E-03    & 0.221929 \\
1/9  & 1/81  & 2.74E-03    & 0.221953 \\
1/10 & 1/100 & 2.22E-03    & 0.221967 \\
1/11 & 1/121 & 1.83E-03    & 0.221977 \\
1/12 & 1/144 & 1.54E-03    & 0.221983
\\ \hline
	\end{tabular}
\end{table}

And then, we increase the iterative counts $k$ to expand the distance between $H$ and $h$, which is shown in Tables \ref{Tab:5-2} and \ref{Tab:5-3}. 
We also observe that $ \| u - u_h^1 \|_1*\max\{H^{1+k},h\}^{-1}$ are stable. 
\begin{table}[htbp]
	\centering\caption{$k=2, r=1$}
	\label{Tab:5-2}\vskip 0.1cm
	\begin{tabular}{{ccccc}}\hline
		$H$   &$h$    &$\| u - u_h^2 \|_1$  &$ \| u - u_h^2 \|_1*\max\{H^{3},h\}^{-1}$ \\ \hline
1/2 & 1/8   & 2.73E-02 & 0.218321 \\
1/3 & 1/27  & 8.20E-03 & 0.221524 \\
1/4 & 1/64  & 3.47E-03 & 0.221784 \\
1/5 & 1/125 & 1.77E-03 & 0.221826 \\
1/6 & 1/216 & 1.03E-03 & 0.221836  \\
\hline
	\end{tabular}
\end{table}
\begin{table}[htbp]
	\centering\caption{$k=3, r=1$}
	\label{Tab:5-3}\vskip 0.1cm
	\begin{tabular}{{ccccc}}\hline
		$H$   &$h$    &$\| u - u_h^3 \|_1$  &$ \| u - u_h^3 \|_1*\max\{H^{4},h\}^{-1}$ \\ \hline
1/2 & 1/16  & 1.38E-02 & 0.220944 \\
1/3 & 1/81  & 2.74E-03 & 0.221805 \\
1/4 & 1/256 & 8.67E-04 & 0.221837 \\
\hline
	\end{tabular}
\end{table}

At last, we implement similar numerical experiments for high order finite element space with $r =2$ and $r =3$ in Tables \ref{Tab:5-4}-\ref{Tab:5-9}. 
By observation, all these numerical experiments are in support of \eqref{Eqn:L} in Theorem \ref{Thm:last}. 
\begin{table}[htbp]
	\centering\caption{$k=1, r=2$}
	\label{Tab:5-4}\vskip 0.1cm
	\begin{tabular}{{ccccc}}\hline
		$H$   &$h$    &$\| u - u_h^1 \|_1$  &$ \| u - u_h^1 \|_1*\max\{H^{3},h^2\}^{-1}$ \\ \hline
1/4  & 1/8   & 2.57E-03 & 0.164578 \\
1/9  & 1/27  & 2.30E-04 & 0.167320                  \\
1/16 & 1/64  & 4.09E-05 & 0.167553                  \\
1/25 & 1/125 & 1.07E-05 & 0.167590                   \\
1/36 & 1/216 & 3.59E-06 & 0.167599  					\\
\hline
	\end{tabular}
\end{table}
\begin{table}[!h]
	\centering\caption{$k=2, r=2$}
	\label{Tab:5-5}\vskip 0.1cm
	\begin{tabular}{{ccccc}}\hline
		$H$   &$h$    &$\| u - u_h^2 \|_1$  &$ \| u - u_h^2 \|_1*\max\{H^{4},h^2\}^{-1}$ \\ \hline
1/8  & 1/64  & 4.09E-05 & 0.167552 \\
1/9  & 1/81  & 2.55E-05 & 0.167571 \\
1/10 & 1/100 & 1.68E-05 & 0.167582  \\
1/11 & 1/121 & 1.14E-05 & 0.167589 \\
1/12 & 1/144 & 8.08E-06 & 0.167593 \\
\hline
	\end{tabular}
\end{table}
\begin{table}[!h]
	\centering\caption{$k=3, r=2$}
	\label{Tab:5-6}\vskip 0.1cm
	\begin{tabular}{{ccccc}}\hline
		$H$   &$h$    &$\| u - u_h^3 \|_1$  &$ \| u - u_h^3 \|_1*\max\{H^{5},h^2\}^{-1}$ \\ \hline
1/5 & 1/55  & 5.54E-05 & 0.167534 \\
1/6 & 1/90  & 2.07E-05 & 0.160874 \\
1/7 & 1/126 & 1.06E-05 & 0.167590\\
1/8 & 1/184 & 4.95E-06 & 0.162211 \\
1/9 & 1/243 & 2.84E-06 & 0.167600  \\
\hline
	\end{tabular}
\end{table}
\begin{table}[!h]
	\centering\caption{$k=1, r=3$}
	\label{Tab:5-7}\vskip 0.1cm
	\begin{tabular}{{ccccc}}\hline
		$H$   &$h$    &$\| u - u_h^1 \|_1$  &$ \| u - u_h^1 \|_1*\max\{H^{4},h^3\}^{-1}$ \\ \hline
1/8  & 1/16  & 1.83E-05 & 0.075054 \\
1/27 & 1/81  & 1.40E-07 & 0.074662 \\
1/64 & 1/256 & 4.44E-09 & 0.074543 \\
\hline
	\end{tabular}
\end{table}
\begin{table}[!h]
	\centering\caption{$k=2, r=3$}
	\label{Tab:5-8}\vskip 0.1cm
	\begin{tabular}{{ccccc}}\hline
		$H$   &$h$    &$\| u - u_h^2 \|_1$  &$ \| u - u_h^2 \|_1*\max\{H^{5},h^3\}^{-1}$ \\ \hline
1/8  & 1/32 & 2.28E-06                     & 0.074870    \\
1/9  & 1/36 & 1.60E-06 					   & 0.074838   \\
1/10 & 1/40 & 1.17E-06                     & 0.074810  \\
1/11 & 1/55 & 4.49E-07                     & 0.072343   \\
1/12 & 1/60 & 3.46E-07                     & 0.074716   \\
\hline
	\end{tabular}
\end{table}
\begin{table}[!h]
	\centering\caption{$k=3, r=3$}
	\label{Tab:5-9}\vskip 0.1cm
	\begin{tabular}{{ccccc}}\hline
		$H$   &$h$    &$\| u - u_h^3 \|_1$  &$ \| u - u_h^3 \|_1*\max\{H^{6},h^3\}^{-1}$ \\ \hline
1/8  & 1/64  & 2.85E-07 & 0.074703 \\
1/9  & 1/81  & 1.40E-07 & 0.074662  \\
1/10 & 1/100 & 7.46E-08 & 0.074630 \\
1/11 & 1/121 & 4.21E-08 & 0.074606 \\
1/12 & 1/144 & 2.50E-08 & 0.074588  \\
\hline
	\end{tabular}
\end{table}

\noindent {\bf Acknowledgments.} The work of the first and second authors were partially funded by the Science and Technology Development Fund, Macau SAR (Nos. 0070/2019/A2, 0031/2022/A1).
The third author was supported by the National Natural Science Foundation of China (Grant No. 11901212). The third and fourth authors are also supported by the National Natural Science Foundation of China (Grant No. 12071160).

\section*{Appendix}\label{secA1}
\setcounter{equation}{0}
\renewcommand\theequation{A.\arabic{equation}}

The purpose of this appendix is to provide the proof of Lemma \ref{Lem:eHk1infty}.

\begin{lemma}\label{Lem:eHk1infty}
Assume $e_H^k$ is given in \eqref{Eqn:eHk} and $\| E_k\|_{1, \infty} \lesssim \mid\log h\mid H^2 $, when $H$ is small enough, it holds that
\begin{equation}\label{Eqn:eHk1infty}
\|e_H^k\|_{1, \infty} \lesssim H.
\end{equation}
\end{lemma}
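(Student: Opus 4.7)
The plan is to represent the point-values of $\partial e_H^k$ through the discrete Green's function $g_H^{\boldsymbol{x}}\in V_H$, convert the resulting identity into a ``linear-in-$E_k$ plus quadratic-in-$(e_H^k-E_k)$'' inequality by exploiting the two Galerkin equations available on $V_H$, and then close the self-referencing quadratic term by a short bootstrapping argument based on the inverse inequality for $V_H$.

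First, for any fixed $\boldsymbol{x}\in\Omega$ I would take $v_H=e_H^k$ in the definition \eqref{Eqn:gHz} of $g_H^{\boldsymbol{x}}$ to obtain the representation
\begin{equation*}
\partial e_H^k(\boldsymbol{x}) = B(u_h;e_H^k,g_H^{\boldsymbol{x}}),
\end{equation*}
which, together with the arbitrariness of $\boldsymbol{x}$ and \eqref{Eqn:1infty}, reduces the task to estimating $B(u_h;e_H^k,g_H^{\boldsymbol{x}})$ uniformly in $\boldsymbol{x}$. The quantitative ingredient available for this step is the Green's function estimate $\|g_H^{\boldsymbol{x}}\|_{1,1}\lesssim|\log H|$ from \eqref{Eqn:ghz11}.

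Next, I would exploit the two Galerkin identities $A(u_h^k+e_H^k,v_H)=0$ from \eqref{Eqn:eHk} and $A(u_h,v_H)=0$ from \eqref{Eqn:uh} (the latter valid for $v_h=v_H\in V_H\subset V_h$), subtract them, and apply a second-order Taylor expansion of $A$ about $u_h$ following the bookkeeping of the proof of Lemma~\ref{Lem:Ek+11}, now with base point $u_h$ in place of $u_h^{k,1}$ and with error $e_H^k-E_k$ playing the role of $E_{k+1}-(E_k-e_H^k)$. This yields an identity of the form
\begin{equation*}
B(u_h;e_H^k,v_H) = B(u_h;E_k,v_H) - R(e_H^k-E_k,v_H), \qquad \forall v_H\in V_H,
\end{equation*}
where $R(w,v)$ collects the second-derivative terms (e.g.\ $(\boldsymbol{a}_{yy}(\theta)w^2,\nabla v)$, $(\boldsymbol{a}_{y\boldsymbol{z}}(\theta)\nabla w\,w,\nabla v)$, \ldots). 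By Remark~\ref{Rem:Assaf} and H\"older's inequality, $|R(w,v)|\lesssim\|w\|_{1,\infty}^2\|v\|_{1,1}$. Setting $v_H=g_H^{\boldsymbol{x}}$, using the hypothesis $\|E_k\|_{1,\infty}\lesssim|\log h|H^2$, and taking the supremum over $\boldsymbol{x}$ with \eqref{Eqn:1infty} leads to
\begin{equation*}
\|e_H^k\|_{1,\infty} \lesssim |\log h|\,|\log H|\,H^2 + |\log H|\bigl(\|e_H^k\|_{1,\infty}+|\log h|H^2\bigr)^2.
\end{equation*}

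The main obstacle is exactly the self-referencing quadratic $|\log H|\|e_H^k\|_{1,\infty}^2$, which cannot be absorbed without some a priori smallness of $\|e_H^k\|_{1,\infty}$. To obtain such control I would run the analogous argument in $H^1$: applying the $V_H$-version of Remark~\ref{Rem:infsupBuh} (whose proof is identical, since $\|u-u_h\|_{1,\infty}\lesssim h^r\lesssim H$ by \eqref{Eqn:u-uh1p}) to the displayed identity, and absorbing the small quadratic piece for $H$ small, I would deduce $\|e_H^k\|_1\lesssim\|E_k\|_1\lesssim|\log h|H^2$. Then the standard inverse estimate $\|e_H^k\|_{1,\infty}\lesssim H^{-1}\|e_H^k\|_1$ on the quasi-uniform mesh $\mathcal{T}_H$ produces the crude bound $\|e_H^k\|_{1,\infty}\lesssim|\log h|\,H=o(1)$ as $H\to 0$. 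Feeding this crude bound back into the displayed quadratic inequality, the coefficient $|\log H|\|e_H^k\|_{1,\infty}$ on the self-referencing term can be made arbitrarily small by taking $H$ small enough, so the usual absorption yields $\|e_H^k\|_{1,\infty}\lesssim|\log h|\,|\log H|\,H^2$, which is dominated by $H$ once $H$ is small, as claimed in \eqref{Eqn:eHk1infty}.
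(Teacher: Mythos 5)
Your linearized identity is correct and is in fact the same identity the paper builds on: subtracting $A(u_h,v_H)=0$ from \eqref{Eqn:eHk} and Taylor expanding about $u_h$ gives exactly the relation $B(u_h;e_H^k,v_H)=B(u_h;E_k,v_H)-R(\eta;u_h,u_h^k+e_H^k,v_H)$, which is the paper's fixed-point equation \eqref{Eqn:BPhiwH} evaluated at its fixed point, and your Green-function step with \eqref{Eqn:gHz}, \eqref{Eqn:ghz11} reproduces the final estimate of the appendix. The genuine gap is in the bootstrapping that is supposed to make the quadratic term absorbable. Your crude bound rests on the claim that the $H^1$-version of the argument yields $\|e_H^k\|_1\lesssim\|E_k\|_1$ after ``absorbing the small quadratic piece''; but the quadratic remainder there is bounded by $\|e_H^k-E_k\|_{1,\infty}\,\|e_H^k-E_k\|_1\,\|v_H\|_1$, so absorbing it into the left-hand side requires precisely the a priori smallness of $\|e_H^k\|_{1,\infty}$ (or, after the inverse inequality, $\|e_H^k\|_1\ll H$) that you are trying to establish. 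At this stage nothing is known about $e_H^k$ beyond the fact that it solves the nonlinear coarse problem \eqref{Eqn:eHk}; there is no monotonicity or global structure that rules out a large coarse correction, so the absorption is circular, and replacing $\|e_H^k\|_{1,\infty}$ by $H^{-1}\|e_H^k\|_1$ via the inverse estimate only reproduces the same circularity with a factor $H^{-1}$.

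This missing a priori localization of $e_H^k$ is exactly what the paper's appendix supplies by a different device: it defines the map $\Phi$ in \eqref{Eqn:BPhiwH} and the set $Q_H$ in \eqref{Eqn:QH} (a ball of radius $H$ around $\hat{P}_H E_k$ in $W^{1,\infty}$), proves $\Phi(Q_H)\subset Q_H$ (Lemma \ref{Lem:subset}) and continuity (Lemma \ref{Lem:continuous}), and invokes Brouwer's fixed point theorem; on $Q_H$ the quadratic remainder is controlled by construction, not by an assumption on $e_H^k$. The fixed point is then identified with $e_H^k$ through the uniqueness of the coarse solution, which yields $e_H^k\in Q_H$ and hence $\|e_H^k\|_{1,\infty}\leq H+\|\hat{P}_H E_k\|_{1,\infty}\lesssim H+|\log H|\,|\log h|\,H^2\lesssim H$ via \eqref{Eqn:hatPH1infty}. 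To repair your proof you would need either this fixed-point (or a continuation/Newton--Kantorovich type) argument to place $e_H^k$ in a small neighborhood of $\hat{P}_H E_k$ before any absorption, or an independent a priori bound on $\|e_H^k\|_{1,\infty}$; the direct estimate-and-absorb route as written does not close.
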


Before we present the proof of Lemma \ref{Lem:eHk1infty}, we need to introduce some preliminaries and lemmas.

For the finite element solution $u_h$ of \eqref{Eqn:uh}, we introduce a projection operator $\hat{P}_H: H_0^1(\Omega) \rightarrow V_H$, which be defined by, 
\begin{equation}\label{Eqn:hatPH}
B(u_h; \hat{P}_H w, v_H) = B(u_h;  w, v_H), \quad \forall  w \in H_0^1(\Omega), v_H \in V_H.
\end{equation}
It's easy to derive that $\hat{P}_H$ is well-defined by the BB-conditions of form $B(u_h; \cdot, \cdot)$ which could be obtained by Remark \ref{Rem:infsupBuh}. 
Furthermore, the projection operator $\hat{P}_H$ satisfies the following estimate
\begin{equation}\label{Eqn:hatPH1infty}
\|\hat{P}_H w \|_{1, \infty} \lesssim \mid\log H \mid  \|w\|_{1, \infty}, \quad \forall w \in W^{1, \infty}(\Omega).
\end{equation}
In fact, taking $v_H = \hat{P}_H w$ in \eqref{Eqn:gHz}, and using \eqref{Eqn:hatPH}, \eqref{Eqn:B}, 
Remark \ref{Rem:Assaf}, H\"{o}lder inequality and \eqref{Eqn:ghz11}, we obtain
\begin{eqnarray*}
\partial \hat{P}_H w ({\boldsymbol{x}}) &=& B(u_h;  \hat{P}_H w, g_H^{\boldsymbol{x}}) 
%
\\
&=& B(u_h;  w, g_H^{\boldsymbol{x}}) 
%
\\
&=& (\boldsymbol{a}_y(u_h)w, \nabla g_H^{\boldsymbol{x}}) 
+ (\boldsymbol{a}_{\boldsymbol{z}}(u_h)\nabla w, \nabla g_H^{\boldsymbol{x}}) 
+ (f_y(u_h)w,  g_H^{\boldsymbol{x}}) 
+ (f_{\boldsymbol{z}}(u_h)\nabla w,  g_H^{\boldsymbol{x}})
%
\\
&\lesssim& \| w\|_{1, \infty} \|g_H^{\boldsymbol{x}}\|_{1, 1}
%
\\
&\lesssim& \mid\log H \mid  \| w\|_{1, \infty}.  
\end{eqnarray*}
Finally using of the arbitrariness of ${\boldsymbol{x}}$ and \eqref{Eqn:1infty}, we could obtain \eqref{Eqn:hatPH1infty}. 

By Taylor expansion, we have (the detailed proof can be found in Lemma 3.1 of \cite{XuJC96:1759})
\begin{equation}\label{Eqn:A-A}
A(w, \chi) = A(v, \chi) + B(v; w-v, \chi) + R(\eta; v, w, \chi),  \quad \forall w , v , \chi \in H_0^1(\Omega), 
\end{equation}
where the forms $A(\cdot, \cdot)$ and $B(\cdot; \cdot, \cdot)$ are given by \eqref{Eqn:A} and \eqref{Eqn:B}, respectively, $\eta = v+ t(w - v)$ and
\begin{eqnarray*}
R(\eta; v, w, \chi)  &=& \int_0^1 \Big[
(\boldsymbol{a}_{yy} (\eta) (v-w)^2, \nabla \chi)
+ 2 (\boldsymbol{a}_{y\boldsymbol{z}} (\eta) \nabla (v-w) (v-w), \nabla \chi)
\\
&&
+( \nabla (v-w)^T \boldsymbol{a}_{\boldsymbol{zz}} (\eta) \nabla (v-w) , \nabla \chi)
 + (f_{yy} (\eta) (v-w)^2,  \chi)
\\
&&
+ 2 (f_{y\boldsymbol{z}} (\eta) \cdot \nabla (v-w) (v-w),  \chi)
\\
&&
+( \nabla (v-w)^T f_{\boldsymbol{zz}} (\eta) \nabla (v-w) ,  \chi)  \Big] (1-t)\mathrm{d}t. 
\end{eqnarray*}

For the proof of Lemma \ref{Lem:eHk1infty}, we introduce a operator $\Phi$ as follow.
Assume $u_h$ is the solution of \eqref{Eqn:uh}, $E_k$, $R$, $u_h^k$ are given in \eqref{Eqn:Ek}, \eqref{Eqn:A-A} and  Algorithm \ref{algorithm}, respectively, 
we defined operator $\Phi: V_H \rightarrow V_H$ by, for any $w_H \in V_H$, 
%
%
\begin{equation}\label{Eqn:BPhiwH}
B(u_h; \Phi (w_H) , v_H)
=  B(u_h; E_k  , v_H) 
 - R(\eta; u_h, u_h^k + w_H, v_H), \ \forall v_H \in V_H,  
\end{equation}
where $\eta = u_h + t(w_H - E_k)$. 
By the BB-conditions of form $B(u_h; \cdot, \cdot)$ (See Remark \ref{Rem:infsupBuh}), it's easy to prove that operator $\Phi$ is well-defined.

We define a space
\begin{equation}\label{Eqn:QH}
Q_H = \{ v_H \in V_H : \| v_H - \hat{P}_H E_k\|_{1, \infty} \leq H\}, 
\end{equation}
where $\hat{P}_H$ is a projection operator defined by \eqref{Eqn:hatPH}. 
Since $Q_H$ is a finite dimensional space, it's easy to see that $Q_H$ is a non-empty compact convex subset.

Next, we will use Brouwer fixed point theorem to prove that \eqref{Eqn:BPhiwH} has a fixed point $\bar{w}_H$ in $Q_H$.

\begin{lemma}\label{Lem:subset}
Assume $\|E_k\|_{1, \infty} \lesssim \mid\log h\mid H^2$, then when $H$ is small enough, we have $\Phi(Q_H) \subset Q_H$. 
\end{lemma}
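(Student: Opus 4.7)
The plan is to show, for any $w_H \in Q_H$, that $\|\Phi(w_H) - \hat{P}_H E_k\|_{1,\infty} \leq H$ once $H$ is sufficiently small. First I would exploit the definitions \eqref{Eqn:BPhiwH} and \eqref{Eqn:hatPH} to write, for every $v_H \in V_H$,
\begin{equation*}
B(u_h; \Phi(w_H) - \hat{P}_H E_k, v_H) = -R(\eta; u_h, u_h^k + w_H, v_H),
\end{equation*}
and then transfer this identity to the $W^{1,\infty}$ level by testing with the discrete Green function $g_H^{\boldsymbol{x}}$ from \eqref{Eqn:gHz}. Since $\Phi(w_H) - \hat{P}_H E_k \in V_H$, this gives the pointwise identity
\begin{equation*}
\partial(\Phi(w_H) - \hat{P}_H E_k)(\boldsymbol{x}) = -R(\eta; u_h, u_h^k + w_H, g_H^{\boldsymbol{x}}).
\end{equation*}

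Next I would estimate the residual. Because $R$ is a Taylor remainder, every term in its integrand involves a second derivative of $\boldsymbol{a}$ or $f$ at $\eta$ (uniformly bounded by Remark \ref{Rem:Assaf}) paired with a quadratic factor in $u_h - u_h^k - w_H = E_k - w_H$ and a single factor from $g_H^{\boldsymbol{x}}$. Applying H\"{o}lder's inequality with $L^\infty \times L^\infty \times L^1$ and then using \eqref{Eqn:ghz11}, I would obtain
\begin{equation*}
\left| R(\eta; u_h, u_h^k + w_H, g_H^{\boldsymbol{x}}) \right| \lesssim \|E_k - w_H\|_{1,\infty}^2 \, \|g_H^{\boldsymbol{x}}\|_{1,1} \lesssim \mid\log H\mid \|E_k - w_H\|_{1,\infty}^2.
\end{equation*}

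Then I would control $\|E_k - w_H\|_{1,\infty}$ by combining the triangle inequality, the definition \eqref{Eqn:QH} of $Q_H$, the projection bound \eqref{Eqn:hatPH1infty}, and the hypothesis $\|E_k\|_{1,\infty} \lesssim \mid\log h\mid H^2$:
\begin{equation*}
\|E_k - w_H\|_{1,\infty} \leq \|w_H - \hat{P}_H E_k\|_{1,\infty} + \|\hat{P}_H E_k\|_{1,\infty} + \|E_k\|_{1,\infty} \lesssim H + \mid\log H\mid \mid\log h\mid H^2 \lesssim H
\end{equation*}
for $H$ small. Feeding this back, taking the supremum over $\boldsymbol{x}$, and invoking \eqref{Eqn:1infty}, I arrive at $\|\Phi(w_H) - \hat{P}_H E_k\|_{1,\infty} \leq C \mid\log H\mid H^2$, which is at most $H$ as soon as $H$ is so small that $C \mid\log H\mid H \leq 1$. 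This proves $\Phi(w_H) \in Q_H$.

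The main obstacle, in my view, is just tracking the logarithmic factor produced by the Green function bound and verifying that it is absorbed. The key point making the argument succeed is that $R$, being a genuine second-order Taylor remainder, contributes a \emph{quadratic} power of $\|E_k - w_H\|_{1,\infty}$; even after multiplication by $\mid\log H\mid$ the resulting $\mid\log H\mid H^2$ is dominated by $H$ for small $H$, which is precisely what the radius of $Q_H$ permits.
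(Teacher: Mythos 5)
Your proposal is correct and follows essentially the same route as the paper's own proof: derive $B(u_h;\Phi(w_H)-\hat{P}_H E_k, v_H) = -R(\eta; u_h, u_h^k+w_H, v_H)$ from \eqref{Eqn:BPhiwH} and \eqref{Eqn:hatPH}, test with the Green function $g_H^{\boldsymbol{x}}$ of \eqref{Eqn:gHz}, bound the quadratic remainder via Remark \ref{Rem:Assaf}, H\"older's inequality and \eqref{Eqn:ghz11}, control $\|E_k-w_H\|_{1,\infty}$ using \eqref{Eqn:QH}, \eqref{Eqn:hatPH1infty} and the hypothesis on $\|E_k\|_{1,\infty}$, and finish with \eqref{Eqn:1infty} and absorption of the logarithmic factor for small $H$. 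The only cosmetic difference is that you bound $\|E_k-w_H\|_{1,\infty}\lesssim H$ before squaring, while the paper splits the squared term into $\|E_k-\hat{P}_H E_k\|_{1,\infty}^2+\|\hat{P}_H E_k-w_H\|_{1,\infty}^2$; the resulting estimates are equivalent.
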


\begin{proof}
For any $w_H \in Q_H$, $v_H \in V_H$, 
rewriting \eqref{Eqn:BPhiwH} with \eqref{Eqn:hatPH}, we have
\begin{equation}\label{Eqn:BuhPhiwH-hatPHEkvH}
B(u_h; \Phi (w_H) -  \hat{P}_H E_k , v_H)   =  - R(u_h + t(w_H - E_k); u_h, u_h^k + w_H, v_H). 
\end{equation}
Substituting $v_H = \Phi (w_H) -  \hat{P}_H E_k$ into \eqref{Eqn:gHz}
and using \eqref{Eqn:BuhPhiwH-hatPHEkvH}, Remark \ref{Rem:Assaf}, H\"{o}lder inequality, \eqref{Eqn:Ek}, triangle inequality, \eqref{Eqn:ghz11}, \eqref{Eqn:hatPH1infty}, \eqref{Eqn:QH} and $\|E_k\|_{1, \infty} \lesssim \mid\log h\mid H^2$, it is obtained that
\begin{eqnarray*}
\partial (\Phi (w_H) -  \hat{P}_H E_k) (\boldsymbol{x})
&= & B(u_h; \Phi (w_H) -  \hat{P}_H E_k , g_H^{\boldsymbol{x}}) 
%
\\
 &=&  - R(u_h + t(w_H - E_k); u_h, u_h^k + w_H, g_H^{\boldsymbol{x}})
%
\\
&\lesssim& \|E_k - w_H\|_{1, \infty}^2 \| g_H^{\boldsymbol{x}}\|_{1, 1}
%
\\
&\lesssim&(  \|E_k - \hat{P}_H E_k \|_{1, \infty}^2 + \| \hat{P}_H E_k  - w_H\|_{1, \infty}^2 ) \mid\log H\mid  
\\
&\lesssim&  (  (1+\mid\log H \mid)^2  \|E_k\|_{1, \infty}^2 + H^2 )\mid\log H\mid 
%
\\
&\lesssim& (  (1+\mid\log H \mid)^2 \mid\log h\mid ^2 H^{4} + H^2 ) \mid\log H\mid. 
\end{eqnarray*}
Further using the arbitrariness of $\boldsymbol{x}$ and \eqref{Eqn:1infty}, the proof is finished.
\end{proof}

\begin{lemma}\label{Lem:continuous}
Assume $\|E_k\|_{1, \infty} \lesssim \mid\log h\mid H^2$, then the operator $\Phi$ is continuous in $V_H$. 
\end{lemma}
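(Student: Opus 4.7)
The plan is to show that whenever $w_H^n \to w_H$ in $V_H$, one has $\Phi(w_H^n) \to \Phi(w_H)$. First I would apply the Babu\v{s}ka--Brezzi condition for the form $B(u_h;\cdot,\cdot)$ on $V_H$ (established by Remark~\ref{Rem:infsupBuh} together with the nestedness of the meshes) to reduce the proof of $\|\Phi(w_H^n)-\Phi(w_H)\|_1\to 0$ to estimating
$$\sup_{v_H\in V_H}\frac{B(u_h;\Phi(w_H^n)-\Phi(w_H),v_H)}{\|v_H\|_1}.$$
Using the bilinearity of $B(u_h;\cdot,\cdot)$ in its second argument (Remark~\ref{Rem:Bbilinear}) and the definition \eqref{Eqn:BPhiwH} of $\Phi$, the numerator collapses to the difference $R(\eta;u_h,u_h^k+w_H,v_H)-R(\eta^n;u_h,u_h^k+w_H^n,v_H)$, where $\eta=u_h+t(w_H-E_k)$ and $\eta^n=u_h+t(w_H^n-E_k)$. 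So the whole task reduces to showing this difference of remainder terms is $o(1)\|v_H\|_1$ uniformly in $v_H$.

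I would split this difference into two pieces: (i) a piece where the second-derivative coefficients $\boldsymbol{a}_{yy}(\cdot),\boldsymbol{a}_{y\boldsymbol{z}}(\cdot),\boldsymbol{a}_{\boldsymbol{zz}}(\cdot),f_{yy}(\cdot),f_{y\boldsymbol{z}}(\cdot),f_{\boldsymbol{zz}}(\cdot)$ are frozen at $\eta^n$ while the quadratic factors vary from $(E_k-w_H^n,\nabla(E_k-w_H^n))$ to $(E_k-w_H,\nabla(E_k-w_H))$, and (ii) a piece where the quadratic factors are frozen at $(E_k-w_H,\nabla(E_k-w_H))$ while the coefficients move from $\eta^n$ to $\eta$. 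For piece (i), I would factor expressions such as $(E_k-w_H^n)^2-(E_k-w_H)^2=(w_H-w_H^n)(2E_k-w_H-w_H^n)$ (and analogous factorizations for mixed and gradient-squared terms), apply Remark~\ref{Rem:Assaf} to bound the coefficients uniformly by $\tilde{C}$, and use H\"older's inequality to pull out a factor of $\|w_H^n-w_H\|_{1,\infty}$; since $V_H$ is finite-dimensional, $\|w_H^n-w_H\|_1\to 0$ upgrades to $\|w_H^n-w_H\|_{1,\infty}\to 0$, so piece (i) is $o(1)\|v_H\|_1$.

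The main obstacle lies in piece (ii): because $\boldsymbol{a}$ and $f$ are assumed only $C^2$, I cannot use a Lipschitz-type bound on their second derivatives. Instead I would invoke uniform continuity on a compact set. Since $u_h\in W^{1,\infty}(\Omega)$ (by Lemma~\ref{Lem:uh}), $E_k$ is bounded in $W^{1,\infty}$, and $w_H^n\to w_H$ in $W^{1,\infty}$, the points $\eta^n$ stay inside a fixed compact subset of $\bar{\Omega}\times\mathbb{R}\times\mathbb{R}^2$ and converge uniformly to $\eta$. Uniform continuity of the second derivatives on that compact set then gives $\|\boldsymbol{a}_{yy}(\eta^n)-\boldsymbol{a}_{yy}(\eta)\|_{0,\infty}\to 0$ and likewise for the five other second-derivative tensors. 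Combined with a uniform $L^\infty$ bound on $(E_k-w_H,\nabla(E_k-w_H))$ and H\"older's inequality, piece (ii) is also $o(1)\|v_H\|_1$.

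Summing the two pieces yields $\sup_{v_H}|B(u_h;\Phi(w_H^n)-\Phi(w_H),v_H)|/\|v_H\|_1\to 0$, so by the BB-condition $\|\Phi(w_H^n)-\Phi(w_H)\|_1\to 0$, establishing continuity of $\Phi$ on $V_H$. The only delicate step is the uniform-continuity argument in piece (ii), and the key simplification throughout is that $V_H$ is finite-dimensional so that $H^1$- and $W^{1,\infty}$-convergence coincide.
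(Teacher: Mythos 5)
Your proposal is correct and follows essentially the same route as the paper's proof: reduce via the BB condition for $B(u_h;\cdot,\cdot)$ to the difference of remainder terms $R$, split by adding and subtracting a term with frozen second-derivative coefficients, factor the quadratic differences, bound by H\"older, and conclude from the continuity of the second derivatives of $\boldsymbol{a}$ and $f$. The only difference is presentational: you argue with sequences, explicit uniform continuity on a compact set and finite-dimensional norm equivalence, whereas the paper bounds $B(u_h;\Phi(w_1)-\Phi(w_2),v_H)$ for $w_1,w_2\in Q_H$ by $C(H)\bigl(\|\boldsymbol{a}_{yy}(\eta_2)-\boldsymbol{a}_{yy}(\eta_1)\|_{0,\infty}+\|w_1-w_2\|_{0,\infty}\bigr)\|v_H\|_1$ and leaves those points implicit.
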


\begin{proof}
For any $w_1, w_2 \in Q_H$, by \eqref{Eqn:BPhiwH}, we have
\begin{eqnarray}\nonumber
B(u_h; \Phi (w_1) - \Phi (w_2) , v_H)  
 &=&     R(u_h + t(w_2 - E_k); u_h, u_h^k + w_2, v_H)
\\ \label{Eqn:BPhiw1-Phiw2}
 && - R(u_h + t(w_1 - E_k); u_h, u_h^k + w_1, v_H). 
\end{eqnarray}
Noticing that the definition of $R$ in \eqref{Eqn:A-A}, for the terms concerning $\boldsymbol{a}_{yy}$ on the right hand side of \eqref{Eqn:BPhiw1-Phiw2}, we can use Remark \ref{Rem:Assaf} and H\"{o}lder inequality to obtain that
\begin{eqnarray} \nonumber
\lefteqn{(\boldsymbol{a}_{yy}(u_h + t(w_2 - E_k))(E_k - w_2)^2, \nabla v_H) - (\boldsymbol{a}_{yy}(u_h + t(w_1 - E_k))(E_k - w_1)^2, \nabla v_H) }
\\ \nonumber
&=& (\boldsymbol{a}_{yy}(u_h + t(w_2 - E_k))(E_k - w_2)^2, \nabla v_H) 
 - (\boldsymbol{a}_{yy}(u_h + t(w_1 - E_k))(E_k - w_2)^2, \nabla v_H)
\\ \nonumber
&&+ (\boldsymbol{a}_{yy}(u_h + t(w_1 - E_k))(E_k - w_2)^2, \nabla v_H)
- (\boldsymbol{a}_{yy}(u_h + t(w_1 - E_k))(E_k - w_1)^2, \nabla v_H)
%
\\ \nonumber
&=& ( \left[\boldsymbol{a}_{yy}(u_h + t(w_2 - E_k))  - \boldsymbol{a}_{yy}(u_h + t(w_1 - E_k))  \right] (E_k - w_2)^2, \nabla v_H) 
\\   \nonumber
&& + (\boldsymbol{a}_{yy}(u_h + t(w_1 - E_k)) \left( -2 E_k w_2 + w_2^2 + 2E_k w_1 - w_1^2  \right), \nabla v_H)
%
\\  \nonumber
&=& 
( \left[\boldsymbol{a}_{yy}(u_h + t(w_2 - E_k))  - \boldsymbol{a}_{yy}(u_h + t(w_1 - E_k))  \right] (E_k - w_2)^2, \nabla v_H) 
\\   \nonumber
&&
 + (\boldsymbol{a}_{yy}(u_h + t(w_1 - E_k)) \left( 2E_k  - w_1 - w_2  \right)(w_1 - w_2), \nabla v_H)
\\ \nonumber
&\lesssim&  
\| \boldsymbol{a}_{yy}(u_h + t(w_2 - E_k))  - \boldsymbol{a}_{yy}(u_h + t(w_1 - E_k)) \|_{0, \infty} \|(E_k - w_2)^2\|_0 \| v_H\|_1
\\  \label{Eqn:ayy-ayy}
&&  
+ \|2E_k - w_1 - w_2  \|_0 \| w_1 - w_2\|_{0, \infty} \| v_H\|_1. 
\end{eqnarray}

For $\|(E_k - w_2)^2\|_0$, we use triangle inequality, \eqref{Eqn:hatPH1infty}, \eqref{Eqn:QH} and $\|E_k\|_{1, \infty} \lesssim \mid\log h\mid H^2$, it's obtained that 
\begin{eqnarray} \nonumber
\|(E_k - w_2)^2\|_0  
&\leq& \| E_k - w_2\|_{1, \infty}^2
\\ \nonumber
&\lesssim& \| E_k \|_{1, \infty}^2 + \| \hat{P}_H E_k\|_{1, \infty}^2 + \| \hat{P}_H E_k   - w_2\|_{1, \infty}^2
\\ \nonumber
&\lesssim& \| E_k \|_{1, \infty}^2  +  \mid\log H\mid ^2 \| E_k \|_{1, \infty}^2 + H^2
\\ \nonumber
&\lesssim& \mid\log h\mid ^2 H^4  + \mid\log H\mid ^2 \mid\log h\mid ^2 H^4  + H^2
\\ \label{Eqn:C1H}
& := & C_1(H), 
\end{eqnarray}
where $C_1(H)$ is a constant depending on $H$. 

Similarly, for $\|2E_k - w_1 - w_2  \|_0 $, there also exists a constant $C_2(H)$ such that
\begin{eqnarray}\label{Eqn:C2H}
\|2E_k - w_1 - w_2  \|_0  \lesssim C_2(H). 
\end{eqnarray}

Substituting \eqref{Eqn:C1H} and \eqref{Eqn:C2H} into \eqref{Eqn:ayy-ayy}, it could be obtained that
\begin{eqnarray*}
\lefteqn{(\boldsymbol{a}_{yy}(u_h + t(w_2 - E_k))(E_k - w_2)^2, \nabla v_H) - (\boldsymbol{a}_{yy}(u_h + t(w_1 - E_k))(E_k - w_1)^2, \nabla v_H) }
\\
&\lesssim& C(H)  \Big[ \| \boldsymbol{a}_{yy}(u_h + t(w_2 - E_k))  - \boldsymbol{a}_{yy}(u_h + t(w_1 - E_k)) \|_{0, \infty}
+  \| w_1 - w_2\|_{0, \infty}  \Big] \| v_H\|_1,
\end{eqnarray*}
where $C(H) = \max\{ C_1(H), C_2(H)\}$ . 
The rest of the items on the right hand side of \eqref{Eqn:BPhiw1-Phiw2} have similar results, and here is omitted. 
The conclusion follows from the above discussion, \eqref{Eqn:BPhiw1-Phiw2}, the BB-conditions of form $B(u_h; \cdot , \cdot)$ (See Remark \ref{Rem:infsupBuh}) and the continuity of second order derivatives  of $\boldsymbol{a}(\cdot, \cdot, \cdot)$ and $f(\cdot, \cdot, \cdot)$ (See the assumptions about $\boldsymbol{a}(\cdot, \cdot, \cdot)$ and $f(\cdot, \cdot, \cdot)$ in Section \ref{Sec:model}). 
\end{proof}

At last, we present the proof of Lemma \ref{Lem:eHk1infty} by Brouwer fixed point theorem.

\begin{proof}
Making use of Lemmas \ref{Lem:subset} and \ref{Lem:continuous} and Brouwer fixed point theorem, we know that \eqref{Eqn:BPhiwH} exists a fixed point $\bar{w}_H$ in $Q_H$.

Taking $w = u_h^k + \bar{w}_H$, $v = u_h$ and $\chi = v_H$ into \eqref{Eqn:A-A}, and then using \eqref{Eqn:uh} with $V_H \subset V_h$, Remark \ref{Rem:Bbilinear}, \eqref{Eqn:Ek}, $\bar{w}_H = \Phi(\bar{w}_H)$ and \eqref{Eqn:BPhiwH}, we obtain that 
\begin{eqnarray}\nonumber
A(u_h^k + \bar{w}_H, v_H) 
&=& A(u_h, v_H) + B(u_h; u_h^k + \bar{w}_H- u_h, v_H) + R(\tilde{\eta}; u_h, u_h^k + \bar{w}_H, v_H)
%
\\ \nonumber
&=& B(u_h; \bar{w}_H , v_H)  - B(u_h; E_k  , v_H)  + R(\tilde{\eta}; u_h, u_h^k + \bar{w}_H, v_H)
%
\\ \nonumber
&=& B(u_h;  \Phi(\bar{w}_H) , v_H)  - B(u_h; E_k  , v_H)  + R(\tilde{\eta}; u_h, u_h^k + \bar{w}_H, v_H)
%
\\ \label{Eqn:Auhk+wHvH}
&=& 0, 
\end{eqnarray}
where $\tilde{\eta} = u_h +  t ( \bar{w}_H - E_k)$. 
By the uniqueness of finite element solution (See Lemma \ref{Lem:uh}),  \eqref{Eqn:eHk} and \eqref{Eqn:Auhk+wHvH}, we can see that $\bar{w}_H = e_H^k$, which implies $e_H^k \in Q_H$.

At last, using triangle inequality, \eqref{Eqn:QH}, \eqref{Eqn:hatPH1infty} and $\| E_k\|_{1, \infty} \lesssim \mid\log h\mid H^{2}$, we obtain
\begin{eqnarray*}
\| e_H^k\|_{1, \infty} &\leq&    \| e_H^k - \hat{P}_H E_k\|_{1, \infty}   +   \|  \hat{P}_H E_k \|_{1, \infty} 
%
\\
&\lesssim&  H +  \mid\log H\mid   \| E_k\|_{1, \infty} 
%
\\
&\lesssim&  H +  \mid\log H\mid  \mid\log h\mid  H^2,  
\end{eqnarray*}
which completes the proof. 
\end{proof}

\end{document}